\documentclass[reqno, 11pt, a4paper]{amsart} %%%

\usepackage[T5,T1]{fontenc}
\usepackage{mlmodern}

\usepackage[utf8]{inputenc}
\allowdisplaybreaks
\usepackage{amsfonts}
\usepackage{amsmath}
\usepackage{amssymb}
\usepackage{amsthm}
\usepackage[text={33pc,605pt},centering, margin=1.25in]{geometry}     %%

\usepackage{mathrsfs} 
\usepackage[dvipsnames]{xcolor}

\usepackage{bbm}
\usepackage{mathtools}

\usepackage{dsfont}

\usepackage[pagebackref=true,colorlinks=true, linkcolor=Blue, citecolor=Blue, pdfencoding=auto, psdextra]{hyperref}
\renewcommand*\backref[1]{\ifx#1\relax \else (Cited on #1) \fi}

\usepackage{appendix}
\usepackage{enumitem}
\usepackage{float}

\usepackage{tikz}
\usepackage{tikz-cd} 
\usetikzlibrary{arrows, arrows.meta}

\usepackage[nameinlink]{cleveref} 
\usepackage{scalerel}[2016/12/29]

\theoremstyle{plain}
\newtheorem{definition}{Definition}
\newtheorem{proposition}[definition]{Proposition}
\newtheorem{lemma}[definition]{Lemma}

\newtheorem{theorem}[definition]{Theorem}
\newtheorem{remark}[definition]{Remark}

\theoremstyle{definition}

\numberwithin{definition}{section}
\numberwithin{equation}{section}

\newcommand*{\E}{\mathbb{E}}
\newcommand*{\R}{\mathbb{R}}

\newcommand*{\Z}{\mathbb{Z}}

\newcommand*{\N}{\mathbb{N}}

\newcommand{\abs}[1]{\left\lvert #1 \right\rvert}

\renewcommand*{\d}{\mathrm{d}}
\newcommand*{\e}{\mathrm{e}}

\makeatletter
\newcommand{\subalign}[1]{%
  \vcenter{%
    \Let@ \restore@math@cr \default@tag
    \baselineskip\fontdimen10 \scriptfont\tw@
    \advance\baselineskip\fontdimen12 \scriptfont\tw@
    \lineskip\thr@@\fontdimen8 \scriptfont\thr@@
    \lineskiplimit\lineskip
    \ialign{\hfil$\m@th\scriptstyle##$&$\m@th\scriptstyle{}##$\hfil\crcr
      #1\crcr
    }%
  }%
}
\makeatother

\crefname{equation}{}{}

\title[QFCLT for the RCM]{Quenched functional central limit theorem for the random conductance model under minimal moments}

\author[L.\ Kolesnikov]{Leonid Kolesnikov} %\orcidlink{0000-0002-3826-0479}}

\address[Leonid Kolesnikov]{TU Braunschweig, Institut für Mathematische Stochastik,
Germany.}
\email{leonid.kolesnikov@tu-braunschweig.de}

\author[Y. Steenbeck]{Yannic Steenbeck}

\address[Yannic Steenbeck]{TU Braunschweig, Institut für Mathematische Stochastik,
Germany.}
\email{yannic.steenbeck@tu-braunschweig.de}

\keywords{Random conductance model, invariance principle, ergodic, quenched law, Sobolev inequality, corrector}

\subjclass[2020]{60K37, 60F17, 	82C41}

\date{\today}

\usepackage{soul}

\begin{document}

\begin{abstract}
  We prove a quenched functional central limit theorem for the random conductance model with ergodic translation-invariant, strictly positive nearest-neighbor conductances, assuming only finite first moments of the conductances and their inverses. This settles an open problem by reaching the critical first-moment threshold, which is sharp for a certain class of integrability assumptions. A key ingredient of the proof is a new Sobolev inequality that seems to be missing from the literature.
\end{abstract}
\maketitle

\section{Quenched functional central limit theorem}

\subsection{The main result}

We consider the well-known \emph{random conductance model} that is given by the reversible \emph{variable-speed random walk} (VSRW) \(X = (X_t)_{t \geq 0}\) on \(\Z^d\), \(d \geq 1\), equipped with random strictly positive nearest-neighbor conductances \(\omega = (\omega(x, y))_{x, y \in \Z^d}\). We always assume that the conductances are symmetric in the sense that $\omega(x,y)=\omega(y,x)$ for all $x,y\in \Z^d$ and set $\omega(x,y)=0$ if $x$ is not a nearest-neighbor of $y$.

The conductances \(\omega\) are sampled from an ergodic translation-invariant probability distribution \(\mathbb{P}\) on \((\Omega, \mathcal{F})\) where \(\Omega = (0, \infty)^{E_d}\), for the set \(E_d = \{\{x, y\} \,\colon\, x,y \in \Z^d, \abs{x-y} = 1\}\) of unoriented nearest-neighbor edges of \(\Z^d\), and where \(\mathcal{F}\) is the Borel-$\sigma$-algebra on \(\Omega\). Here, translation-invariance of $\mathbb P$ means that $\mathbb{P}(\theta_z A) = \mathbb{P}(A)$ for every $A \in \mathcal{F}$ and every spatial shift \(\theta_z \colon \Omega \to \Omega\), \(z \in \mathbb{Z}^d\), defined by
\begin{align}
    (\theta_z \omega)(x, y)
    := \omega(x + z, y + z), \qquad {x,y} \in \Z^d.
\end{align} Ergodicity means then that every translation-invariant event has probability zero or one under \(\mathbb{P}\).

For $\omega\in \Omega$ fixed, the VSRW \(X\) is the continuous-time Markov chain with (formal) generator \(\mathcal{L}^\omega\), acting on functions \(f \colon \Z^d \to \R\) via 
\begin{align}
    (\mathcal{L}^\omega f)(x)
    = \sum_{y \sim x} \omega(x,y) [f(y) - f(x)], \quad x \in \Z^d,
\end{align} and (formal) Dirichlet form
\begin{align}
    \mathcal{E}^\omega(f, g)
    = \frac{1}{2} \sum_{x} \sum_{y \sim x} \omega(x,y) [f(y) - f(x)][g(y) - g(x)].
\end{align}

A rigorous construction of this continuous-time Markov process, as well as some further background on the random conductance model, can be found in the surveys of Biskup~\cite{Biskup2011Recent} and Andres~\cite{andres2025homogenization}.

For $x\in\mathbb Z^d$, we denote by $P_x^\omega$ the law of $X$ in the fixed environment $\omega$ (\emph{quenched law}), started at $x$, and by $E_x^\omega$ the corresponding expectation.

We say that a \emph{quenched functional central limit theorem (QFCLT)} holds for \(X\), if for \(\mathbb{P}\)-a.a.\ \(\omega\), under \(P_0^\omega\) the processes \((\frac{1}{n} X_{n^2 t})_{t \in [0, T]}\) converge as \(n \to \infty\) in distribution to \((\Sigma W_t)_{t \in [0, T]}\) for a standard Brownian motion \(W\) in \(\R^d\) and a deterministic covariance matrix \(\Sigma^2\), in the Skorokhod space \(D([0, T] \to \R^d)\), for every \(T > 0\). Here, $\Sigma$ denotes the unique symmetric positive semidefinite square root of the covariance matrix $\Sigma^2$.

We present the following main result.
\begin{theorem}[QFCLT]\label{theorem:quenched_functional_CLT}
    Let \(\mathbb{P}\) be an ergodic translation-invariant probability distribution on \((\Omega, \mathcal{F})\). Additionally, let \(\mu^\omega(x) := \sum_{y \sim x} \omega(x, y)\) and \(\nu^\omega(x) := \sum_{y \sim x} \omega(x, y)^{-1}\) for \(x \in \Z^d\), and suppose the moment condition
    \begin{align}
        \mathbb{E}[\mu^\omega(0)] < \infty, \quad \mathbb{E}[\nu^\omega(0)] < \infty,
    \end{align} holds.
    Then, a QFCLT holds for \(X\).
\end{theorem}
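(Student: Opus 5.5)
The plan follows the standard corrector approach. First we construct the harmonic coordinates $\Phi(\omega,x)=x-\chi(\omega,x)$. Here the corrector $\chi$ is obtained as an $L^2$-limit of gradients: it is the projection of the position field onto the space of potential (cocycle) gradients in $L^2(\omega\,\d\mathbb P)$. This step only needs $\mathbb E[\mu^\omega(0)]<\infty$. The resulting $\chi$ is a cocycle with $\mathbb E[\omega(0,e)\,\abs{\nabla\chi}^2]<\infty$, and $\Phi(\omega,\cdot)$ is $\mathcal L^\omega$-harmonic. Consequently $M_t=\Phi(\omega,X_t)$ is a martingale under $P_0^\omega$. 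By ergodicity of the environment seen from the particle (reversible w.r.t.\ $\mathbb P$ for the VSRW), its quadratic variation satisfies $\frac1{n^2}\langle M\rangle_{n^2t}\to t\,\Sigma^2$ a.s. The martingale FCLT (Lindeberg–Feller / Helland) then gives $\frac1n M_{n^2\cdot}\Rightarrow\Sigma W$ quenched. The nondegeneracy of $\Sigma^2$ is not needed here.

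Second, we reduce the QFCLT to quenched sublinearity of the corrector:
\begin{align}
\lim_{n\to\infty}\max_{\abs{x}\le n}\frac{\abs{\chi(\omega,x)}}{n}=0\qquad\mathbb P\text{-a.s.}
\end{align}
Together with a tightness estimate for $\sup_{t\le T}\abs{X_{n^2t}}/n$, this shows that $\frac1n\chi(\omega,X_{n^2t})$ is uniformly small in probability. For the tightness estimate we use $\abs{X}\le\abs{\Phi}+\abs{\chi}$, Doob's inequality for $M$, and sublinearity on large balls, in the usual bootstrap fashion. Hence the whole task is the $\ell^\infty$-sublinearity, which I would prove in two parts. (a) $\ell^1$-sublinearity: $\frac1{n^d}\sum_{\abs{x}\le n}\abs{\chi(x)}/n\to0$. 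This follows from the cocycle property, the ergodic theorem and a Poincaré/Sobolev argument with weights $\nu$. It is known in the literature and needs only $\mathbb E[\nu^\omega(0)]<\infty$, since $\abs{\nabla\chi}\le(\omega\abs{\nabla\chi}^2)^{1/2}\omega^{-1/2}$ and Cauchy–Schwarz applies. (b) Upgrading $\ell^1$ to $\ell^\infty$ by a De Giorgi / Moser iteration for the $\mathcal L^\omega$-harmonic (in fact Poisson-type) function $\chi$ on balls.

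Step (b) is the main obstacle. Moser iteration in the style of Andres–Deuschel–Slowik uses the weighted Sobolev inequality on balls
\begin{align}
\Big(\frac1{\abs{B}}\sum_{B}\abs{u}^{\rho}\Big)^{1/\rho}\lesssim \abs{B}^{\frac2d}\Big(\frac1{\abs B}\sum_B\nu\Big)^{\cdot}\,\frac{\mathcal E^\omega_B(u,u)}{\abs{B}}
\end{align}
with some $\rho>1$, combined with the energy (Caccioppoli) estimate whose constant involves the averages of $\mu$. The classical threshold $1/p+1/q<2/d$ (resp.\ $2/(d-1)$) comes from losing integrability in this Sobolev step through Hölder's inequality. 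At the critical first moments one cannot afford Hölder. Instead I would prove a new Sobolev inequality in which only the first moment of $\nu$ on the ball (and on slightly larger balls, or on suitably chosen random good spheres) enters. Concretely, I would integrate along many paths or use a Whitney/averaging over boxes: control $\abs{u(x)-u(y)}$ by sums of $\abs{\nabla u}$ along lines, and apply Cauchy–Schwarz with weight $\omega$ on each edge, so the bad factor is $\sum\omega^{-1}$ along lines. That factor is averaged over the family of lines using only $\mathbb E[\nu]$. I would combine this with a choice of good radii via the ergodic theorem, ensuring that averages of $\mu,\nu$ on the relevant boundaries and balls stay bounded. I would also use a weak-type ($L^{1,\infty}$ or level-set) iteration in the De Giorgi style, which tolerates constant Sobolev gain $\rho$ arbitrarily close to but above $1$, rather than full Moser iteration. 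Once the maximal inequality $\max_{B_n}\abs{\chi}\lesssim C(\bar\mu,\bar\nu)\,\frac{1}{\abs{B_{2n}}}\sum_{B_{2n}}\abs{\chi}+o(n)$ is established with ergodically bounded constants, (a) yields the $\ell^\infty$-sublinearity and hence the theorem.
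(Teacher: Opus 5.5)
\textbf{Step (b) is a genuine gap, and it is where the whole difficulty lies.} Your first steps are fine: the corrector by $L^2$-projection, the martingale FCLT for $\Phi(\omega,X_t)$, and the reduction to a statement about $\chi$.

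Any De Giorgi or Moser scheme starts from a Caccioppoli inequality of the type $\mathcal{E}^\omega(\eta(u-k)_+,\eta(u-k)_+)\lesssim\sum_x\mu^\omega(x)\abs{\nabla\eta(x)}^2(u-k)_+(x)^2$. The iteration closes only if the Sobolev exponent $\rho$ beats the H\"older conjugate $p/(p-1)$ of the integrability exponent $p$ of $\mu^\omega$. This is exactly where the threshold $\frac1p+\frac1q<\frac2d$ comes from. Under $\mathbb{E}[\mu^\omega(0)]<\infty$ alone this conjugate is $\infty$. The right-hand side can then only be bounded by the ball average of $\mu^\omega$ times $\max(u-k)_+^2$, and no Sobolev gain, weak-type or not, compensates for that.

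Your first-moment Sobolev inequality (Cauchy--Schwarz along paths, with bad factor $\sum\omega^{-1}$) is close in spirit to the paper's pointwise bound. However, it only treats the $\nu$-side, whereas the obstruction sits on the $\mu$-side. Choosing good spheres only replaces $d$ by $d-1$. The resulting Bella--Sch\"affner condition $\frac1p+\frac1q<\frac{2}{d-1}$ still forces $p>1$ for $d\ge 3$, and to my knowledge it is essentially sharp for local boundedness. So this route cannot deliver the maximal inequality $\max_{B_n}\abs{\chi}\lesssim C(\bar\mu,\bar\nu)\,\mathrm{avg}_{B_{2n}}\abs{\chi}+o(n)$ with ergodically controlled constants under $M(1,1)$ in $d\ge3$. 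In any case, $\ell^\infty$-sublinearity of $\chi$ is more than the theorem requires.

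\textbf{How the paper avoids this.} It bypasses elliptic regularity entirely and proves only the weaker, sufficient statement $\frac1n\sup_{t\le n^2T}\abs{\chi(\omega,X_t)}\to0$ in $P_0^\omega$-probability.
\begin{enumerate}
\item It passes to the sped-up walk $Z$ with speed measure $m^\omega=[\mathcal{M}(1+\nu^\omega)]^{-4}$. For this walk a global weighted Sobolev inequality $\Vert f\Vert_{\ell^{2d/(d-2)}(m^\omega)}^2\lesssim\mathcal{E}^\omega(f,f)$ holds using only $\mathbb{E}[\nu^\omega(0)]<\infty$. The ingredients are a pointwise bound via the $L^1$-Poincar\'e inequality, the weak-(1,1) maximal inequality, and Maz'ya truncation.
\item Via Nash this gives $q_t^\omega(x,y)\lesssim t^{-d/2}$, with no $\mu$ involved.
\item The first moment of $\mu$ enters only through the ergodic activity bound $E_0^\omega[N_t]\le K^\omega t$. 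Combined with an entropy--activity inequality, this yields diffusive range of $Z$.
\item Finally, $\chi$ is approximated by bounded gradients $\phi^{(k)}$. The path current of the error field $r^{(k)}$ is controlled on dyadic time blocks, for the walk confined to diffusive boxes, by the heat-kernel bound and an energy estimate for the reflected walk in equilibrium. The resulting bound in terms of $\sqrt{\mathcal{M}_0(e^\omega_{r^{(k)}})}$ vanishes along a subsequence, by $\mathbb{E}[e^\omega_{r^{(k)}}(0)]\to0$ and the weak-(1,1) lemma.
\end{enumerate}
No Caccioppoli inequality ever appears, and hence there is no H\"older loss in $\mu$.
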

\begin{remark}[Covariance matrix]
   The inverse moment condition \(\mathbb{E}[\nu^\omega(0)] < \infty\) guarantees that \(\Sigma^2\) is non-degenerate. Moreover, if the conductance law \(\mathbb{P}\) is invariant under coordinate permutations and reflections, we have \(\Sigma^2 = \sigma^2 I_d\). 
   Semi-explicit formulas for \(\Sigma^2\) are known: a representation in terms of the so-called harmonic embedding is given in~\cite[Proposition 3.7]{andres2025homogenization}, and a corresponding variational formula can be derived from it, cf.~\cite{CaputoIoffe2003}.
\end{remark}

Naturally, numerous results in this direction have already been established in the literature. We give a very brief, non-exhaustive overview here:
\begin{enumerate}
\item For i.i.d.\ nearest-neighbor conductances in dimensions \(d\geq2\), the QFCLT is known~\cite{ABDH13} even when the conductances are allowed to vanish, provided that the probability of a positive conductance is supercritical for Bernoulli bond percolation, without requiring any finite-moment assumptions.

\item For ergodic conductances satisfying
\[
    \mathbb E[\mu^\omega(0)]<\infty
    \qquad\text{and}\qquad
    \mathbb E[\nu^\omega(0)]<\infty,
\]
the QFCLT is already known in dimensions \(d=1,2\); see~\cite[Section~4.4]{Biskup2011Recent}. Precursors to the one-dimensional result go back to Kawazu and Kesten~\cite{KawazuKesten1984}.

\item For ergodic conductances in dimensions \(d\geq3\), consider the \(M(p,q)\) moment conditions
\[
    \mathbb E[(\mu^\omega(0))^p]<\infty,
    \qquad
    \mathbb E[(\nu^\omega(0))^q]<\infty.
\]
Andres, Deuschel and Slowik~\cite{AndresDeuschelSlowik2015} showed that the QFCLT holds under
\[
    \frac{1}{p}+\frac{1}{q}<\frac{2}{d},
\]
which was subsequently improved by Bella and Schäffner~\cite{BellaSchaeffner2020} to
\[
    \frac{1}{p}+\frac{1}{q}<\frac{2}{d-1}.
\]
Our result answers affirmatively the open question of whether the QFCLT holds in all dimensions \(d\geq2\) under the moment condition $M(1,1)$. Moreover, the $M(1,1)$ condition is optimal in the following sense: for every $p<1$, there exist ergodic distributions satisfying $M(p,p)$ for which the QFCLT fails, see~\cite[Theorem 1.4]{BarlowBurdzyTimar2016}.

\item Beyond the setting considered here, QFCLTs have been established in several broader frameworks: for ergodic conductances that are allowed to vanish, QFCLTs are known under suitable moment and geometric regularity assumptions on the infinite cluster of positive-conductance edges; see, for instance,~\cite{DNS18} and, more recently,~\cite{ASS25}. In a different direction, quenched invariance principles have been obtained for ergodic random conductance models with long-range jumps under \(M(p,q)\)-type moment assumptions; see~\cite{BCKW21}. Finally, under additional integrability and regularity assumptions, in particular on the corrector, QFCLTs have also been established in the \(p\)-variation rough-path topology, allowing for degenerate conductances and long-range jumps; see~\cite{BBOS2026}.

\end{enumerate}

\subsection{Reduction and complete proof overview}

We marry, heavily supported by \texttt{AI}, the strategies of \cite{AndresDeuschelSlowik2015} and \cite{ba2015sobolev}, which in turn hinge on the so-called corrector method, which goes back (among others) to Kozlov~\cite{Kozlov1985}, and on Sobolev inequalities.
Notice that our proof is restricted to dimensions $d\geq 3$; the cases $d = 1, 2$ are already established in the literature and are not reproved here.

We assume from now on that the reader is familiar with the contents of \cite[Sections 3.1 and 3.2]{andres2025homogenization}. 
In particular, the Hilbert space $L^2_{\mathrm{cov}}$ consists of square-integrable scalar-valued covariant fields and admits the orthogonal decomposition $L^2_{\mathrm{cov}}=L^2_{\mathrm{pot}}\oplus L^2_{\mathrm{sol}}$. Here, $L^2_{\mathrm{pot}}$ is obtained as the closure of gradients of bounded local functions, while $L^2_{\mathrm{sol}}$ is its orthogonal complement. Applying the corresponding projection coordinatewise to the position field yields the $\mathbb R^d$-valued corrector $\chi=(\chi_1,\ldots,\chi_d)$. The corrected position $\Phi=(\Phi_1,\ldots,\Phi_d)$, given by $\Phi(\omega,x):=x-\chi(\omega,x)$, is the harmonic embedding; in particular, $x\mapsto\Phi(\omega,x)$ is $\mathcal{L}^\omega$-harmonic. 

Along the random walk $X$, this yields the decomposition $X_t=\Phi(\omega,X_t)+\chi(\omega,X_t)$, where $(\Phi(\omega,X_t))_{t\geq0}$ is a martingale and its diffusive scaling is handled by the standard martingale functional central limit theorem. The main difficulty is therefore to show that the corrector $\chi(\omega,X_t)$ is subdiffusive, which is the focus of our proof: We will recall the well-known, cf. \cite[(3.5)]{andres2025homogenization}, reduction of the validity of a QFCLT to the following 
\begin{proposition}[Quenched subdiffusivity of the corrector along paths in probability]\label{proposition:quenched_subdiffusivity_of_the_corrector_along_paths_in_probability}
    For \(\mathbb{P}\)-a.a.\ \(\omega \in \Omega\) and all \(T \geq 0\) it holds in \(P_0^\omega\)-probability that
    \begin{align}\label{equation:quenched_subdiffusivity_of_the_corrector_along_paths_in_probability}
        \frac{1}{n} \sup_{0 \leq t \leq n^2 T} \abs{\chi(\omega, X_t)} \xrightarrow[n \to \infty]{} 0.
    \end{align} 
\end{proposition}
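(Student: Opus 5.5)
We plan to follow the scheme of \cite{AndresDeuschelSlowik2015}, splitting the claim into (a) sublinearity of the corrector on average and (b) upgrading this to $\max_{|x|\le Cn}|\chi(\omega,x)|=o(n)$ via a maximal inequality for $\mathcal L^\omega$-harmonic functions. The new ingredient is a Sobolev inequality that works with only first moments of $\mu^\omega,\nu^\omega$.

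\emph{Step 1 (reduction to a ball).} Under $M(1,1)$ there is a Carne--Varopoulos/heat-kernel-free tail estimate for the VSRW. Its quadratic variation is controlled by $\mu^\omega$, and $\frac1{n^d}\sum_{|x|\le n}\mu^\omega(x)\to d\,\E[\mu^\omega(0)]$ by the ergodic theorem. From this we will show $\lim_{K\to\infty}\limsup_n P_0^\omega(\sup_{t\le n^2T}|X_t|>Kn)=0$; equivalently, one can use the martingale $\Phi(\omega,X_t)$ together with the a priori bound on $\chi$ below. Then \eqref{equation:quenched_subdiffusivity_of_the_corrector_along_paths_in_probability} follows from
\[
\lim_{n\to\infty}\frac1n\max_{|x|\le Kn}|\chi(\omega,x)|=0\qquad\text{for every }K.
\]

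\emph{Step 2 (averaged sublinearity).} The corrector has gradient in $L^2_{\mathrm{pot}}$, so by \cite[Section~3]{andres2025homogenization} we have $\E[\omega(0,e)|\nabla\chi|^2]<\infty$. Cauchy--Schwarz with $\nu^\omega$ gives $\nabla\chi\in L^1(\mathbb P)$ with zero mean. The standard ergodic argument (ergodic theorem along lines plus Kozlov-type arguments) then yields $\frac1{n^{d}}\sum_{|x|\le Kn}\frac{|\chi(\omega,x)|}{n}\to0$, i.e.\ $\ell^1$-sublinearity, and also sublinearity along coordinate directions.

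\emph{Step 3 (main obstacle: $\ell^1\to\ell^\infty$).} Each $\chi_j$ solves $\mathcal L^\omega\chi_j=\mathcal L^\omega x_j$, i.e.\ $\Phi_j$ is harmonic. We run a De Giorgi/Moser iteration on balls $B(n)$ for $u=\chi_j$. With only $M(1,1)$, the iteration of \cite{AndresDeuschelSlowik2015} fails, since it needs $\frac1p+\frac1q<\frac2d$. Our plan is to use the improved route of \cite{BellaSchaeffner2020} and \cite{ba2015sobolev}: choose good radii $\rho\in[n,2n]$ by averaging, so that the $(d-1)$-dimensional sphere integrals $\int_{\partial B(\rho)}\nu^\omega$ are controlled by the ergodic averages. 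Then we prove a new Sobolev inequality on spheres (dimension $d-1$) that requires only $\nu\in L^1$ on the sphere, of the form
\[
\|u\|_{\ell^{r}(\partial B_\rho)}\lesssim \Big(\textstyle\sum_{\partial B_\rho}\nu\Big)^{1/2}\Big(\textstyle\sum_{\partial B_\rho}\omega|\nabla u|^2\Big)^{1/2}+\dots
\]
with some $r$ strictly above the threshold that is needed. Combined with Caccioppoli and the harmonicity of $\Phi_j$, this should give a one-step (non-iterative) $L^\infty$–$L^1$ estimate, $\max_{B(n)}|\chi_j|\le \epsilon n+C(\omega)\,n^{-d}\sum_{B(2n)}|\chi_j|$, with $C(\omega)$ bounded by ergodic averages of $\mu,\nu$. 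Together with Step 2 this finishes the argument. The hardest part is certainly establishing this critical Sobolev inequality and closing the estimate at exactly first moments, since the iteration exponents degenerate there. We expect to exploit that $\Phi_j$ satisfies a maximum principle on the ball, so that $\max_{B_\rho}$ is controlled by $\max_{\partial B_\rho}$. This reduces the problem to a $(d-1)$-dimensional, one-dimension-lower oscillation bound on the sphere, where $M(1,1)$ is subcritical in the sense of \cite{BellaSchaeffner2020}.
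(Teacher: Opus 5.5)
Your proposal has a genuine gap in Step 3, which is where the whole difficulty lies. Passing to spheres is exactly how \cite{BellaSchaeffner2020} improve $\frac{2}{d}$ to $\frac{2}{d-1}$; it is not a further gain.

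Do their bookkeeping with $\mu^\omega\in L^p$ and $\nu^\omega\in L^q$. Hölder with $\nu^\omega$ and the $(d-1)$-dimensional Sobolev inequality give $u\in\ell^r(\partial B_\rho)$ with $\frac1r=\frac{q+1}{2q}-\frac{1}{d-1}$. The Caccioppoli term $\sum\mu^\omega\abs{\nabla\eta}^2u^2$, however, requires $u\in\ell^{2p/(p-1)}$. The iteration gains only if $\frac1p+\frac1q<\frac{2}{d-1}$, which at $p=q=1$ reads $2<\frac{2}{d-1}$ and fails for every $d\geq2$.

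Concretely, with $\mu^\omega$ only in $L^1$ the Caccioppoli term needs $u\in\ell^\infty$ a priori. No finite $r$ from a sphere Sobolev inequality (yours gives at best $r=\frac{d-1}{d-2}$) is ``above the threshold''. So your claim that $M(1,1)$ becomes subcritical after the reduction is false. The $\frac{2}{d-1}$ threshold is, as far as I know, essentially sharp for local boundedness of weak solutions. You should therefore not expect a deterministic estimate $\max_{B(n)}\abs{\chi_j}\leq\epsilon n+C(\omega)n^{-d}\sum_{B(2n)}\abs{\chi_j}$ with $C(\omega)$ depending only on averages of $\mu^\omega,\nu^\omega$.

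The maximum principle does not rescue this. It applies to $\Phi_j$, not to $\chi_j=x_j-\Phi_j$. Transferring it only gives $\max_{B_\rho}\abs{\chi_j}\leq\max_{\partial B_\rho}\abs{\chi_j}+2\rho$, which loses exactly the linear order you need to beat. Removing that loss amounts to homogenizing the Dirichlet problem, which is circular.

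The missing idea is that the statement only asks for subdiffusivity along the path in probability, and the paper never proves $\ell^\infty$-sublinearity. Its route is:
\begin{enumerate}
\item It time-changes $X$ into $Z$ with speed measure $m^\omega=[\mathcal{M}(1+\nu^\omega)]^{-4}$. For this weight, $\big(\sum_x m^\omega(x)\abs{f(x)}^p\big)^{2/p}\lesssim_d\mathcal{E}^\omega(f,f)$ with $p=\frac{2d}{d-2}$ holds using only $\E[\nu^\omega(0)]<\infty$. The proof uses a pointwise bound via the maximal function, a weak-type estimate and Maz'ya truncation.
\item This gives the heat-kernel bound $q^{\omega,D}_t\lesssim_d t^{-d/2}$, with no dependence on $\mu^\omega$.
\item It approximates $\chi$ by gradients of bounded $\phi^{(k)}$. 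The path current of the error field $r^{(k)}$ on dyadic time blocks is bounded by its energy, via a forward--backward martingale decomposition for the reflected walk in equilibrium, transferred to the walk from $0$ by the heat-kernel bound.
\item Confinement to boxes of diffusive size comes from an entropy--activity bound on the range of $Z$.
\item The weak-$(1,1)$ inequality makes $\mathcal{M}_0(e^\omega_{r^{(k)}})$ small along a subsequence.
\end{enumerate}
The assumption $\E[\mu^\omega(0)]<\infty$ enters only through the activity bound and the finiteness of the corrector energy, never through a Moser or De Giorgi iteration.
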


To control the corrector, we follow the strategy of \cite{ba2015sobolev} and introduce a time-changed (sped-up) version \(Z\) of \(X\), which will have speed measure \(m^\omega\) given by a power of a Hardy-Littlewood maximal function of the inverse conductances \(\nu^\omega\). Notice that it indeed obviously suffices to show \eqref{equation:quenched_subdiffusivity_of_the_corrector_along_paths_in_probability} for any sped-up version \(Z\) of \(X\) to prove Proposition~\ref{proposition:quenched_subdiffusivity_of_the_corrector_along_paths_in_probability}.
Our \(Z\) will have its jump rates uniformly bounded from below by \(1\). Much better, a corresponding Sobolev inequality for the speed measure of \(Z\) will allow us to get a heat-kernel bound of the form
\begin{align}
    \frac{P_x^\omega(Z_t = y)}{m^\omega(y)} 
    \,\lesssim_d \, t^{-d/2},
\end{align} which heuristically tells us not only that \(Z\) jumps often enough, but that it also does not backtrack into the same, maybe atypical, regions too much. We will capitalize heavily on the corresponding rigorous statement.

We will now precisely construct the auxiliary random walk \(Z\) motivated by the Sobolev inequality we present in the following.
For a non-negative function \(h \colon \Z^d \to [0, \infty)\), consider the Hardy-Littlewood maximal function $\mathcal{M} h\colon \Z^d \to [0, \infty]$ given by
\begin{align}
    (\mathcal{M}h)(x)
    = \sup_{Q \ni x} \frac{1}{\abs{Q}} \sum_{y \in Q} h(y),
\end{align} where the supremum ranges over axis-parallel cubes. In general, \(Q\) will here always denote a cube of the form \(x + Q_R\), for \(x \in \Z^d\) and \(R \geq 0\), with \(Q_R := [-R, R]^{d} \cap \Z^d\) a lattice cube centered at the origin.
By the following standard weak-(1,1) lemma and our assumption \(\mathbb{E}[\nu^\omega(0)] < \infty\), we can make the choice \(h = 1 + \nu^\omega\).
\begin{lemma}\label{lemma:weak_1_1}
    Let \(h\) be a translation-invariant non-negative random field on \(\Z^d\) with \(\mathbb{E}[h(0)] < \infty\). Then, for all \(\lambda > 0\),
    \begin{align}
        \mathbb{P}(\mathcal{M}h(0) > \lambda)
        \lesssim_d \frac{\mathbb{E}[h(0)]}{\lambda}.
    \end{align}
    In particular, for \(\mathbb{P}\)-a.a.\ \(\omega\) we have \((\mathcal{M}h)(x) < \infty\) for all \(x \in \Z^d\).
\end{lemma}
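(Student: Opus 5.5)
The plan is to transfer the classical deterministic weak-$(1,1)$ inequality for the discrete Hardy--Littlewood maximal function to the probabilistic setting, using translation invariance in the form of an averaging trick (a transference argument in the style of Wiener's maximal ergodic theorem). Two ingredients are needed.

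First, I will use the deterministic bound. For any $g\colon\Z^d\to[0,\infty)$ with finite support,
\begin{align}
  \abs{\{x\in\Z^d : \mathcal{M}g(x)>\lambda\}}\lesssim_d \frac{1}{\lambda}\sum_x g(x).
\end{align}
This follows from the Vitali covering lemma for cubes. Every point of the level set lies in a cube $Q$ with $\sum_Q g>\lambda\abs{Q}$. Since $g$ has finite sum, these cubes have bounded side length, so a greedy selection of disjoint cubes $Q_i$ whose threefold enlargements cover the level set works. The resulting constant is $3^d$, up to lattice effects in comparing $\abs{3Q}$ with $\abs{Q}$.

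Second, I must deal with the fact that $h$ is not compactly supported, and also localize the maximal function. For $R\in\N$, define the truncated maximal function $\mathcal{M}_R h$ using only cubes of radius at most $R$. Then $\mathcal{M}_R h(0)\uparrow\mathcal{M}h(0)$, and by monotone convergence it suffices to bound $\mathbb{P}(\mathcal{M}_R h(0)>\lambda)$ uniformly in $R$. Fix $L\gg R$. For $x\in Q_L$, the quantity $\mathcal{M}_R h(x)$ depends only on $h$ restricted to $Q_{L+2R}$, so $\mathcal{M}_R h(x)\le \mathcal{M}(h\mathbf 1_{Q_{L+2R}})(x)$. Translation invariance gives $\mathbb{P}(\mathcal{M}_Rh(x)>\lambda)=\mathbb{P}(\mathcal{M}_Rh(0)>\lambda)$, since $\mathcal{M}_R h(x)(\omega)=\mathcal{M}_R h(0)(\theta_x\omega)$. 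This uses that $h$ is a stationary field, i.e.\ $h(x,\omega)=h(0,\theta_x\omega)$, which is the intended meaning of "translation-invariant"; equality in law of shifts suffices.

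Averaging over $x\in Q_L$ then gives
\begin{align}
  \mathbb{P}(\mathcal{M}_Rh(0)>\lambda)
  &=\frac{1}{\abs{Q_L}}\,\E\Bigl[\abs{\{x\in Q_L:\mathcal{M}_Rh(x)>\lambda\}}\Bigr]\\
  &\lesssim_d \frac{1}{\lambda\abs{Q_L}}\,\E\Bigl[\sum_{y\in Q_{L+2R}} h(y)\Bigr]
  =\frac{\abs{Q_{L+2R}}}{\abs{Q_L}}\,\frac{\E[h(0)]}{\lambda}.
\end{align}
Letting $L\to\infty$ makes the ratio of cube volumes tend to $1$. Then letting $R\to\infty$ yields the claimed inequality.

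For the almost-sure finiteness, let $\lambda\to\infty$ to get $\mathbb{P}(\mathcal{M}h(0)=\infty)=0$. By translation invariance the same holds at each $x$, and a countable union over $x\in\Z^d$ completes the argument. Alternatively, note that $\mathcal{M}h(x)\le C_x\,\mathcal{M}h(0)$, since any cube containing $x$ sits inside a comparable cube containing $0$.

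The only step requiring care is the first one, the discrete Vitali covering with its lattice constants. Everything else is routine bookkeeping, and the essential ingredient is the truncation in $R$, which keeps the boundary error $\abs{Q_{L+2R}}/\abs{Q_L}\to1$.
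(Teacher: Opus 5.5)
Your argument is correct and complete. The paper gives no proof of this lemma and only invokes it as a standard fact; your proof is the standard Wiener/Calderón transference argument one would supply: the counting-measure weak-\((1,1)\) bound (which the paper itself uses in the proof of Lemma~\ref{lemma:weak_sobolev_inequality}), transferred to the stationary field by averaging over \(Q_L\), with the truncation to \(\mathcal{M}_R\) making the boundary factor \(\abs{Q_{L+2R}}/\abs{Q_L}\to 1\) harmless and using only stationarity, not ergodicity.
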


We now present the following
\begin{lemma}[Crucial Sobolev inequality]\label{lemma:sobolev_inequality}
    Let \(p = \frac{2 d}{d-2}\) and \(\beta > \frac{p}{2}\).
    Suppose \(\omega \in \Omega\) is such that \(\mathcal{M}(\nu^\omega + 1) < \infty\) for all \(x \in \Z^d\). Denote \(m^\omega_\beta(x) := [\mathcal{M}(\nu^\omega + 1)(x)]^{-\beta}\).
    Then,
    \begin{align}
        \bigg(\sum_{x \in \Z^d} m^\omega_\beta(x) \abs{f(x)}^p \bigg)^{2/p}
        \lesssim_{d, \beta} \mathcal{E}^\omega(f, f)
    \end{align} for all finitely supported \(f \colon \Z^d \to \R\).
\end{lemma}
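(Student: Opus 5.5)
The plan is to prove a weak-type version of the inequality from a pointwise potential bound, and then upgrade it to the strong type by the standard Maz'ya truncation argument. Write \(h := \nu^\omega + 1\) and \(g(e) := \omega(e)^{1/2}\abs{\nabla_e f}\) for edges \(e\), so that \(\norm{g}_2^2 := \sum_e g(e)^2 = \mathcal{E}^\omega(f,f)\); we write \(\norm{g}_2\) for this quantity below. Since \(\abs{\nabla_e f} = g(e)\,\omega(e)^{-1/2}\) and \(\omega(e)^{-1} \le \nu^\omega(x)\) for each endpoint \(x\) of \(e\), the gradient is controlled by \(g\) times \(h^{1/2}\) at a neighbouring vertex.

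\textbf{Step 1 (pointwise bound).} For finitely supported \(f\) and \(d\ge 3\), use the standard discrete Riesz-potential representation
\[
\abs{f(x)} \lesssim_d \sum_{e} \abs{\nabla_e f}\,(1+\abs{x-e})^{1-d},
\]
obtained, for example, by averaging telescoping sums along lattice paths from \(x\) to infinity. Split the sum into dyadic shells \(\abs{x-e}\sim r\).

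For shells with \(r\le R\), apply Cauchy--Schwarz inside the cube \(x+Q_{2r}\). This bounds the shell contribution by
\[
r^{1-d}\Big(\sum_{Q_{2r}} g^2\Big)^{1/2}\Big(\sum_{Q_{2r}} h\Big)^{1/2} \lesssim r\,(\mathcal M \tilde g^2(x))^{1/2}(\mathcal M h(x))^{1/2},
\]
where \(\tilde g\) is \(g\) transferred to vertices.

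For shells with \(r>R\), keep the global norm \(\norm{g}_2\) instead, which gives \(r^{1-d/2}\norm{g}_2(\mathcal M h(x))^{1/2}\).

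Summing the dyadic shells and optimizing with \(R^{d/2} = \norm{g}_2/(\mathcal M\tilde g^2(x))^{1/2}\) (Hedberg's trick) gives
\[
\abs{f(x)}^p \lesssim (\mathcal M h(x))^{p/2}\,\norm{g}_2^{2p/d}\,\mathcal M\tilde g^2(x).
\]
Here we use \((1/2-1/d)p = 1\).

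\textbf{Step 2 (weak type).} Pure summation fails here, because \(\mathcal M\) is not bounded on \(\ell^1\). This is the main obstacle, and the slack \(\beta>p/2\) is what resolves it.

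Fix \(\lambda>0\) and set \(t = \lambda^p\norm{g}_2^{-2p/d}\). On the dyadic layer \(\{\mathcal M h\sim 2^k\}\), \(k\ge 0\) (recall \(\mathcal M h\ge 1\)), the set \(\{\abs f>\lambda\}\) is contained in \(\{\mathcal M\tilde g^2 \gtrsim t\,2^{-kp/2}\}\). By the deterministic weak-(1,1) bound for \(\mathcal M\), which is the same covering argument as in Lemma~\ref{lemma:weak_1_1}, this set has cardinality at most \(C\,2^{kp/2}\norm{g}_2^2/t\). The weight on this layer is at most \(2^{-k\beta}\). Hence
\[
m^\omega_\beta(\{\abs f>\lambda\}) \lesssim \frac{\norm{g}_2^2}{t}\sum_{k\ge0}2^{k(p/2-\beta)} \lesssim_{d,\beta} \frac{\norm{g}_2^{2+2p/d}}{\lambda^p} = \frac{\mathcal E^\omega(f,f)^{p/2}}{\lambda^p}.
\]
The geometric series converges because \(\beta>p/2\), and the last equality uses \(2+2p/d=p\).

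\textbf{Step 3 (truncation).} Apply Step 2 to the slabs \(f_k := \min\{(\abs f-2^k)_+,\,2^k\}\), \(k\in\Z\). We have \(\{\abs f\ge 2^{k+1}\}\subset\{f_k\ge 2^k\}\), so
\[
\sum_x m^\omega_\beta\abs f^p \lesssim \sum_k 2^{kp}\,m^\omega_\beta(\{f_k\ge 2^k\}) \lesssim \sum_k \mathcal E^\omega(f_k,f_k)^{p/2}.
\]
Since \(p/2\ge1\), the right-hand side is at most \(\big(\sum_k\mathcal E^\omega(f_k,f_k)\big)^{p/2}\).

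It remains to check \(\sum_k \mathcal E^\omega(f_k,f_k) \le \mathcal E^\omega(f,f)\). On each edge the increments \(\abs{\nabla_e f_k}\) are nonnegative and sum to at most \(\abs{\nabla_e\abs f}\le\abs{\nabla_e f}\). The sum of their squares is therefore at most the square of their sum, which gives the claim. Raising both sides to the power \(2/p\) yields the lemma.
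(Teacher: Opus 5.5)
Your proposal is correct and follows essentially the same route as the paper. Both arguments use a Hedberg-type pointwise bound $\abs{f(x)}^p\lesssim_d [\mathcal{M}(1+\nu^\omega)(x)]^{p/2}\,\mathcal{E}^\omega(f,f)^{p/d}\,\mathcal{M}e_f^\omega(x)$, then a weak-type estimate over dyadic layers of $\mathcal{M}(1+\nu^\omega)$, where $\beta>p/2$ makes the geometric series converge, and then the Maz'ya truncation upgrade. The differences are only presentational: you derive the pointwise bound from a discrete Riesz-potential representation instead of telescoping dyadic cube averages with the $L^1$-Poincaré inequality (summing the paper's dyadic bound gives exactly that potential), and you write out the truncation step explicitly where the paper cites it.
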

Its proof is given in Section~\ref{section:proofs_the_sobolev_inequality}.
We see that in particular \(\beta = 4\) works for all \(d \geq 3\) in the above lemma. This motivates setting
\begin{align}
    m^\omega := m^\omega_4 = [\mathcal{M}(\nu^\omega + 1)]^{-4}
\end{align} and considering the auxiliary random walk \(Z\) with rates
\begin{align}
    \lambda^\omega(x, y) := \frac{\omega(x, y)}{m^\omega(x)}.
\end{align}
To be technically precise, we first restrict to the (translation-invariant, probability \(1\)) set of \(\omega \in \Omega\) such that \(\mathcal{M}(\nu^\omega + 1) < \infty\) for all \(x \in \Z^d\), and just set \(m^\omega = 1\) on its (probability \(0\)) complement.
We denote the corresponding quenched laws and expectations by \(P_x^\omega\) and \(E_x^\omega\), \(x \in \Z^d\), just as for the physical random walk \(X\). We denote
\begin{align}
    p_t^\omega(x, y)
    = P_x^\omega(Z_t = y), 
    \quad q_t^\omega(x, y) 
    = \frac{p_t^\omega(x, y)}{m^\omega(y)}, \quad x,y \in \Z^d,
\end{align} for the auxiliary walk \(Z\).
Notice that the sped-up walk \(Z\) remains non-explosive, since it is obtained from the non-explosive walk \(X\) by the standard time-change with speed measure \(m^\omega\), where \(0 < \mathbb{E}[m^\omega(0)] < \infty\), and thus \(\int_0^t m^\omega(X_s) \d s \xrightarrow[t \to \infty]{} \infty\); cf.~\cite[Remark~3.4(ii)]{andres2025homogenization}.

We will later, in Section~\ref{section:proofs_uniform_smoothing}, see by known arguments that the Sobolev inequality indeed implies heat-kernel control as in the following lemma. For a finite set \(D \Subset \Z^d\) we will always write \(\tau_D\) for the first exit time from \(D\) of the random walk under consideration. The heat kernel of the auxiliary walk \(Z\) killed on exiting \(D\) is denoted by
\begin{align}
    q_t^{\omega, D}(x, y)
    = \frac{P_x^\omega(Z_t = y, \tau_D > t)}{m^\omega(y)}, 
    \quad x, y \in D.
\end{align}
\begin{lemma}[Uniform smoothing]\label{lemma:uniform_smoothing_heat_kernel_density_bound}
    For every finite set \(D \Subset \Z^d\), the auxiliary walk \(Z\) killed on exiting \(D\) obeys 
    \begin{align}\label{equation:uniform_smoothing_heat_kernel_density_bound}
        q_t^{\omega, D}(x, y)
        \lesssim_d t^{-d/2}, \quad t  > 0.
    \end{align} 
    In particular, the same bound holds for \(q_t^\omega\) in place of \(q_t^{\omega, D}\).
\end{lemma}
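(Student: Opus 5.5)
The plan is the classical Nash argument, with Lemma~\ref{lemma:sobolev_inequality} (taking \(\beta = 4\)) supplying the Sobolev inequality with respect to the speed measure \(m^\omega\). First, the killed walk \(Z\) on the finite set \(D\) is a finite-state reversible Markov chain with respect to \(m^\omega\). Indeed, its generator is \((m^\omega)^{-1}\mathcal{L}^\omega\) with Dirichlet boundary conditions, and it is self-adjoint in \(\ell^2(D, m^\omega)\). Consequently \(q_t^{\omega,D}\) is symmetric and satisfies the semigroup property
\begin{align}
q_{2t}^{\omega,D}(x,x) = \sum_{y} q_t^{\omega,D}(x,y)^2\, m^\omega(y).
\end{align}
By Cauchy--Schwarz, \(q_{t}^{\omega,D}(x,y)\le \sqrt{q_t^{\omega,D}(x,x)\,q_t^{\omega,D}(y,y)}\). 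It therefore suffices to bound the on-diagonal quantity \(\phi(t) := \|u_t\|_{\ell^2(m^\omega)}^2\), where \(u_t := q_{t/2}^{\omega,D}(x,\cdot)\) is finitely supported, extended by zero outside \(D\). Note also that \(\|u_t\|_{\ell^1(m^\omega)} = P_x^\omega(\tau_D > t/2) \le 1\).

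Next, a direct computation gives \(\phi'(t) = -2\mathcal{E}^\omega(u_t,u_t)\) up to a constant factor from the time scaling. Here the Dirichlet form is exactly the physical one, because the weight \(m^\omega\) cancels, and the zero extension makes the full-space form coincide with the killed one. Then I would interpolate \(\ell^2(m)\) between \(\ell^1(m)\) and \(\ell^p(m)\) with \(p = 2d/(d-2)\):
\begin{align}
\|u\|_{2,m} \le \|u\|_{1,m}^{\theta}\,\|u\|_{p,m}^{1-\theta}, \qquad \tfrac12 = \theta + \tfrac{1-\theta}{p},
\end{align}
which gives \(\theta = 2/(d+2)\). Combining this with Lemma~\ref{lemma:sobolev_inequality} and \(\|u\|_{1,m}\le 1\) yields the Nash-type inequality
\begin{align}
\phi^{1+2/d} \lesssim_d \mathcal{E}^\omega(u,u).
\end{align}
Indeed, \(\|u\|_{2,m}^{2/(1-\theta)} \le \|u\|_{p,m}^2 \lesssim \mathcal{E}\), and \(2/(1-\theta) = 2 + 4/d\). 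Hence \(\phi' \lesssim -\phi^{1+2/d}\). Integrating gives \(\phi(t)^{-2/d} \gtrsim t\), that is \(\phi(t) \lesssim_d t^{-d/2}\), with constants independent of \(D\), \(x\) and \(\omega\), which is \eqref{equation:uniform_smoothing_heat_kernel_density_bound}.

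The ``in particular'' part follows by exhausting \(\Z^d\) with boxes \(D_n \uparrow \Z^d\). Since \(Z\) is non-explosive, \(\tau_{D_n}\to\infty\) almost surely, so monotone convergence gives \(q_t^{\omega,D_n}(x,y)\uparrow q_t^\omega(x,y)\), and the uniform bound passes to the limit. The main obstacle is a bookkeeping one rather than a conceptual one: I have to make sure the Sobolev inequality is applied to functions supported in \(D\) with the right weight, and that the \(\ell^1(m)\) mass bound is used correctly, given that \(m\) is the invariant measure. I would verify that \(\sum_y p_t(x,y)\le 1\) is exactly \(\|q_t(x,\cdot)\|_{1,m}\le 1\); it is. The remaining routine care concerns the differentiability of \(\phi\) in finite dimension, which is trivial since the semigroup is a matrix exponential.
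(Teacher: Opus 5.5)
Your proposal is correct and follows the same route as the paper: Hölder interpolation between $\ell^1(m^\omega)$ and $\ell^p(m^\omega)$ turns the Sobolev inequality of Lemma~\ref{lemma:sobolev_inequality} (with $\beta=4$) into the Nash inequality, and the Nash differential inequality for the killed semigroup then gives the on-diagonal bound. The paper delegates that last step to \cite[Lemma 2.8]{BarlowDeuschel2010}, whereas you carry it out explicitly, including the Cauchy--Schwarz off-diagonal step and the exhaustion argument for the ``in particular'' part, so your write-up is a faithful expansion of the same argument rather than a different approach.
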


Turning back to a proof of Proposition~\ref{proposition:quenched_subdiffusivity_of_the_corrector_along_paths_in_probability}, we recall first that it suffices to show the same convergence with \(Z\) in place of \(X\), because \(Z\) is just a sped-up version of \(X\). 
It will come in handy for the next steps to introduce some further notation. For a field \(r \colon \Omega \times \Z^d \times \Z^d \to \R\), denote by
\begin{align}
    e_r^\omega(x)
    = \sum_{y \sim x} \omega(x, y) \abs{r(\omega, x, y)}^2, \quad x \in \Z^d,
\end{align} its \emph{physical conductance energy}, by 
\begin{align}
    \mathcal{M}_0 e^\omega_r
    = (\mathcal{M} e^\omega_r)(0)
    %\sup_{R \geq 0} \frac{1}{\abs{Q_R}} \sum_{x \in Q_R} e_r^\omega(x) 
\end{align} its maximal average at the origin, and its \emph{path current} by
\begin{align}
    J^\omega_t(r)
    = \sum_{0 < s \leq t \colon Z_s \neq Z_{s-}} r(\omega, Z_{s-}, Z_s).
\end{align}
Now we recall that by definition of the corrector, we can approximate \(\chi\) in \(L^2_{\mathrm{cov}}\) by gradients of bounded functions \(\phi^{(k)}\). This means that there are bounded measurable functions \(\phi^{(k)} \colon \Omega \to \R^d\) such that for the approximation error
\begin{align}\label{equation:approximation_error_r_k}
    r^{(k)}(\omega, x, y)
    := [\chi(\omega, y) - \chi(\omega, x)] - [\phi^{(k)}(\theta_y \omega) - \phi^{(k)}(\theta_x \omega)]
\end{align} it holds
\begin{align}\label{equation:energy_goes_to_zero}
    \mathbb{E}[e^\omega_{r^{(k)}_j}(0)] = \Vert r^{(k)}_j(\cdot, 0, \cdot) \Vert_{L^2_{\mathrm{cov}}}^2
    \xrightarrow[k \to \infty]{} 0, \quad 1 \leq j \leq d,
\end{align} coordinatewise. One can perhaps already sense now, thinking of \eqref{equation:energy_goes_to_zero} and the weak-(1,1) bound Lemma~\ref{lemma:weak_1_1}, that the following proposition closes the proof of Proposition~\ref{proposition:quenched_subdiffusivity_of_the_corrector_along_paths_in_probability}. This Proposition~\ref{proposition:subdiffusive_control_by_maximal_function_of_energy} vaguely embodies the idea that \emph{fluctuations cost energy} and reminds us in spirit of \cite{AnconaLyonsPeres1999}.
A proof of the reduction of Proposition~\ref{proposition:quenched_subdiffusivity_of_the_corrector_along_paths_in_probability} to it can be found in Section~\ref{section:proofs_reduction}.
\begin{proposition}\label{proposition:subdiffusive_control_by_maximal_function_of_energy}
    There exists a finite random variable \(A^\omega\) and a finite constant \(B_d\), such that for every measurable, covariant and antisymmetric field \(r \colon \Omega \times \Z^d \times \Z^d \to \R\) and \(\epsilon > 0\), \(K \geq 1\), it holds that
    \begin{align}
        \limsup_{t \to \infty} P_0^\omega \big( \sup_{s \leq t} \abs{J_s^\omega(r)} > \epsilon \sqrt{t} \big)
        \leq \frac{A^\omega}{K} + \frac{B_d K^{d/2}}{\epsilon} \sqrt{\mathcal{M}_0(e_r^\omega)}.
    \end{align}
\end{proposition}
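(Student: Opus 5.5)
We use the decomposition of the path current \(J^\omega_t(r)\) into a martingale and a compensator. Since \(Z\) jumps from \(x\) to \(y\) at rate \(\lambda^\omega(x,y)=\omega(x,y)/m^\omega(x)\), we write
\begin{align}
    J^\omega_t(r) = M_t + \int_0^t V^\omega(Z_s)\,\d s, \qquad V^\omega(x) := \sum_{y\sim x}\lambda^\omega(x,y)\, r(\omega,x,y).
\end{align}
Here \(M\) is a square-integrable martingale (after localization). Its predictable quadratic variation is
\begin{align}
    \langle M\rangle_t=\int_0^t \frac{e^\omega_r(Z_s)}{m^\omega(Z_s)}\,\d s.
\end{align}
The drift term is a nuisance, and we intend to avoid it by time reversal. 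By reversibility of \(Z\) with respect to \(m^\omega\), and by antisymmetry of \(r\), the Lyons–Zheng forward–backward decomposition expresses \(J_s\) on a finite horizon \([0,t]\) as \(\tfrac12(M_s - (\widehat M_{t}-\widehat M_{t-s}))\). Here \(\widehat M\) is the analogous martingale for the reversed path. The drift cancels exactly because \(r(\omega,x,y)=-r(\omega,y,x)\). The reversed path is not started at \(0\) but from the law of \(Z_t\), so we must compare that law with the stationary-type measure \(m^\omega\). This is precisely where Lemma~\ref{lemma:uniform_smoothing_heat_kernel_density_bound} enters: \(P_0^\omega(Z_t=y)\lesssim_d t^{-d/2} m^\omega(y)\).

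Next we localize in space. Let \(D=Q_{K^{1/2}\sqrt t}\), or more generally a box of side comparable to \(\sqrt{Kt}\). We bound
\begin{align}
    P_0^\omega(\sup_{s\le t}|J_s|>\epsilon\sqrt t) \le P_0^\omega(\tau_D\le t) + P_0^\omega(\sup_{s\le t\wedge\tau_D}|J_s|>\epsilon\sqrt t).
\end{align}
For the exit probability we want \(\limsup_t P_0^\omega(\tau_{Q_{\sqrt{Kt}}}\le t)\le A^\omega/K\). We plan to get this from the harmonic embedding: \(\Phi(\omega,Z)\) is a martingale whose quadratic variation is \(\int_0^t \mu$-type energy$/m^\omega(Z_s)\,\d s\). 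Its expectation is controlled, via the heat-kernel bound and the ergodic theorem applied to \(\mu^\omega\) and to the corrector energy, by \(A^\omega t\). Doob's inequality then gives \(\mathbb{E}\sup|\Phi(Z_s)|^2\lesssim A^\omega t\), and hence a bound of order \(A^\omega/K\) after dividing by \(Kt\). Passing from \(\Phi\) to the position itself is not immediate. As a first attempt, we would use instead the martingale part of the coordinate current \(J_t(x\mapsto y-x)\), treated with the same Lyons–Zheng trick. That current is exactly \(Z_t\). Its energy is \(\mu^\omega\), so the argument below with \(r\) replaced by the coordinate increments yields the exit bound with \(A^\omega\) built from \(\mathcal M_0(\mu^\omega)\), which is finite almost surely by Lemma~\ref{lemma:weak_1_1}. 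We expect this self-consistent use of the main estimate to be the cleanest route.

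For the localized current, we apply Doob's \(L^2\) (or \(L^1\) Burkholder–Davis–Gundy) inequality to both \(M\) and \(\widehat M\) on \(\{t\le\tau_D\}\). The forward part gives
\begin{align}
    E_0^\omega\int_0^{t\wedge\tau_D}\frac{e_r(Z_s)}{m(Z_s)}\d s=\int_0^t\sum_{y\in D} q^{\omega,D}_s(0,y)\,e_r(y)\,\d s .
\end{align}
Bounding \(q_s\lesssim s^{-d/2}\) alone is not integrable near \(0\), so we also use the trivial bound \(q_s(0,y)\le 1/m(y)\). The resulting time integral gives \(\lesssim t\,|D|^{-1}\sum_{D} e_r\cdot t^{\,\cdot}\). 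After optimizing, we expect a quantity \(\lesssim_d t\,K^{d/2}\,\mathcal M_0(e_r)\), using \(|D|\asymp (Kt)^{d/2}\) and \(\sum_{y\in D}e_r(y)\le |D|\,\mathcal M_0(e_r)\). The reversed martingale is handled in the same way, with initial law bounded by \(t^{-d/2}m\) through Lemma~\ref{lemma:uniform_smoothing_heat_kernel_density_bound}. This gives \(\int_0^t\sum_{y\in D}t^{-d/2} m(y)\,e_r(y)/m(y)\,\d s \lesssim t\cdot K^{d/2}\mathcal M_0(e_r)\). Using Chebyshev in \(L^1\) with Cauchy–Schwarz then produces \(B_dK^{d/2}\epsilon^{-1}\sqrt{\mathcal M_0(e_r)}\). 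The main obstacle is the small-time and forward part: \(q_s(0,\cdot)\) for small \(s\) is not controlled uniformly by the maximal average. If the direct bound fails, we would first try applying the reversal trick to the whole interval, so that only the endpoint density at time \(t\) ever appears. A fallback is to split the time interval at \(t/2\), reversing on each half.
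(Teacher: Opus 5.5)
There is a genuine gap in the exit estimate \(\limsup_t P_0^\omega(\tau_D\le t)\le A^\omega/K\). This is really a \emph{diffusive range} statement for \(Z\), and neither of your routes proves it.

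The harmonic-embedding route is circular. Passing from \(\Phi(\omega,Z)\) back to \(Z\) requires control of \(\chi(\omega,Z)\), which is exactly what this proposition is meant to deliver.

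The ``self-consistent'' route fails by scaling, even if you grant your localized bound \(E_0^\omega[\sup_{s\le t\wedge\tau_D}\abs{J_s}^2]\lesssim tK^{d/2}\mathcal M_0(e_r^\omega)\). That bound only sees the path up to \(\tau_D\). So, using your estimate with \(\epsilon=\sqrt K\), the best it gives is \(P_0^\omega(\tau_D\le t)\le P_0^\omega(\sup_{s\le t\wedge\tau_D}\abs{Z_s}\ge\sqrt{Kt})\lesssim K^{d/4-1/2}\sqrt{\mathcal M_0(\mu^\omega)}\). The volume factor \(\abs{D}t^{-d/2}\) built into every such energy estimate beats the Chebyshev gain when \(d\ge2\). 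Hence this bound does not even tend to \(0\) as \(K\to\infty\).

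The paper instead uses Lemma~\ref{lemma:auxiliary_RW_diffusive_range}, \(E_0^\omega[\sup_{s\le t}\abs{Z_s}]\le C^\omega\sqrt t\) for \emph{all} \(t\ge1\), as an independent input of a different nature. Its proof combines the range--entropy--activity inequality, the entropy comparison \(H^\omega(t)\approx\frac d2\log t\) (via the heat-kernel bound), the ergodic activity bound, and a bootstrap over \([t/2,t]\). Markov's inequality then yields the exit bounds, with \(A^\omega\) a multiple of \(C^\omega\).

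Your localized current estimate from the point start is also not established, and the trouble is not at small times. For \(s\in[1,t]\), the on-diagonal bound \(q_s^{\omega,D}\lesssim s^{-d/2}\) only gives \(\int_1^t s^{-d/2}\,\d s\sum_{y\in D}e_r^\omega(y)\). This is of order \((Kt)^{d/2}\mathcal M_0(e_r^\omega)\), which is \(\gg tK^{d/2}\mathcal M_0(e^\omega_r)\) for \(d\ge3\). You would need off-diagonal decay of \(q_s^\omega(0,\cdot)\), which is not available here.

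Nor can the backward part be handled ``from the law of \(Z_t\)''. Under \(P_0^\omega\), the reversed path is the reflected equilibrium walk conditioned on \(\{Z_t=0\}\). That is a Doob \(h\)-transform, under which \(\widehat M\) is not a martingale. Comparing it with equilibrium costs the factor \(1/\pi_D^\omega(0)\), which is of order \(\abs{D}\).

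The paper's remedy is your fallback carried to its end; the first half \([0,t/2]\) has the same problem as \([0,t]\), so one must recurse. It has three steps:
\begin{enumerate}
\item Apply the Markov property at \(s/2\). By Lemma~\ref{lemma:uniform_smoothing_heat_kernel_density_bound}, the path on \([s/2,s]\) is then dominated by the reflected walk in equilibrium, up to a factor \(\lesssim s^{-d/2}m^\omega(Q)\).
\item Use the forward--backward decomposition only there (Lemma~\ref{lemma:fluctuations_energy_for_reflected_walk_in_equilibrium}).
\item Cover \([0,t]\) by the intervals \([s_j/2,s_j]\), \(s_j=t2^{-j}\).
\end{enumerate}
A single box of side \(\sqrt{Kt}\) then bounds the \(j\)-th piece only by \(\epsilon^{-1}K^{d/4}2^{j(d-2)/4}\sqrt{\mathcal M_0(e_r^\omega)}\), which is not summable for \(d\ge3\). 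One needs scale-adapted boxes \(Q_{\lceil L_j\sqrt{s_j}\rceil}\) with \(L_j=K2^{j/(2d)}\). These give terms \(\epsilon^{-1}K^{d/2}2^{-j/4}\sqrt{\mathcal M_0(e^\omega_r)}\). They also require exit bounds at every dyadic scale at once, which is exactly what the time-uniform diffusive range lemma provides.
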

In turn, the proof of Proposition~\ref{proposition:subdiffusive_control_by_maximal_function_of_energy} hinges on the following two lemmas, which will be proven in Section~\ref{section:diffusive_range} resp.\ Section~\ref{section:confined_path_currents}.
\begin{lemma}[Diffusive range]\label{lemma:auxiliary_RW_diffusive_range}
    There is a finite random variable \(C^\omega\) such that for all \(t \geq 1\) it holds
    \begin{align}
        E_0^\omega \Big[ \sup_{s \leq t} \abs{Z_s} \Big]
        \leq C^\omega \sqrt{t}.
    \end{align}
\end{lemma}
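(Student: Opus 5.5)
We plan to deduce the diffusive range from the on-diagonal heat-kernel bound of Lemma~\ref{lemma:uniform_smoothing_heat_kernel_density_bound}, combined with a Carne--Varopoulos / Davies-type off-diagonal argument. Alternatively, we can use a martingale decomposition of the coordinate functions along \(Z\). The simplest route is the second one, and we sketch it first.

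Write \(Z_s = M_s + \int_0^s b(Z_u)\,\d u\). Here \(b(x) = \sum_{y} \lambda^\omega(x,y)(y-x)\) is the drift of the coordinate function, and \(M\) is a martingale with predictable quadratic variation \(\int_0^s \mu^\omega(Z_u)/m^\omega(Z_u)\,\d u\). The drift term, however, is not small: \(b\) does not vanish, since coordinates are not \(\mathcal{L}^\omega\)-harmonic. We therefore expect the honest route to go through the harmonic embedding, \(Z_s = \Phi(\omega,Z_s) + \chi(\omega,Z_s)\). That requires sublinearity of \(\chi\), which is circular. Instead we use the heat kernel directly, as follows.

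\textbf{Step 1 (expected distance at fixed time).} Show \(E_0^\omega[\abs{Z_t}] \le C^\omega\sqrt t\). We follow Barlow's method, as in \cite{AndresDeuschelSlowik2015} and \cite{ba2015sobolev}. Set \(M(t) = E_0^\omega[\abs{Z_t}]\) and \(Q(t) = -E_0^\omega[\log q_t^\omega(0,Z_t)]\). The on-diagonal bound \(q_t^\omega \lesssim t^{-d/2}\) gives \(Q(t) \ge \frac d2 \log t - c\). A Gaussian-entropy comparison gives \(Q(t) \le c + d\log M(t) + \) a term in \(\log\) of the speed-measure mass of balls. Differentiating in time gives \(M'(t)^2 \lesssim \bar\mu\, Q'(t)\), where \(\bar\mu\) is an ergodic average of \(\mu^\omega\) on the relevant scale. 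The condition \(\mathbb{E}[\mu^\omega(0)]<\infty\) and the ergodic theorem make this average eventually bounded by \(c\,\mathbb{E}[\mu^\omega(0)]\) \(\mathbb P\)-a.s. The volume terms are controlled by \(m^\omega\le 1\) and \(\mathbb E[m^\omega(0)]>0\), which give \(\sum_{Q_R} m^\omega \asymp R^d\) for large \(R\). Integrating the resulting differential inequalities gives \(M(t)\le C^\omega\sqrt t\) for \(t\ge1\). The random constant \(C^\omega\) absorbs the scale beyond which the ergodic averages are close to their means.

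\textbf{Step 2 (from fixed time to supremum).} Use a maximal inequality for reversible chains via the strong Markov property. Let \(\sigma\) be the first time \(\abs{Z_s}>2\lambda\). On \(\{\sigma\le t\}\), the walk, restarted at \(Z_\sigma\), ends within distance \(\lambda\) of the origin only with controlled probability. Applying Step 1 from the starting point \(Z_\sigma\) would require uniformity in the starting point, which ergodicity does not give. We therefore use the killed heat-kernel bound of Lemma~\ref{lemma:uniform_smoothing_heat_kernel_density_bound}, which is uniform in \(x\). We bound the exit-time tail \(P_x^\omega(\tau_{B(x,R)}\le t)\) by comparing the time spent in \(B(x,R)\) with the mass \(\sum_{B(x,R)} m^\omega\), together with the Step 1 bound in averaged form. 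This gives \(P_0^\omega(\sup_{s\le t}\abs{Z_s}>\lambda)\lesssim C^\omega\sqrt t/\lambda\), and integrating in \(\lambda\) up to polynomial scale closes the argument. For large \(\lambda\) we use crude bounds on the jump count, which has Poisson rate at most \(\sup_{Q_\lambda}\mu^\omega/m^\omega\), growing only polynomially in \(\lambda\) almost surely.

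\textbf{Main obstacle.} The main obstacle is Step 1's control of the quadratic-variation rate \(\mu^\omega/m^\omega\) along the path. This requires the ergodic average of \(\mu^\omega\) weighted by the time spent at each site, and we expect to handle it using the heat-kernel bound to dominate occupation densities by \(t^{-d/2}\) times \(m^\omega\). This turns \(\int_0^t E[\mu^\omega/m^\omega(Z_s)]\,\d s\) into spatial averages of \(\mu^\omega\) over boxes of radius about \(\sqrt t\), which are then controlled by the ergodic theorem.
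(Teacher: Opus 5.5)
Your Step~1 is in substance the fixed-time version of the paper's range--entropy--activity inequality, but you misidentify the factor multiplying $Q'(t)=I^\omega(t)$, and your plan for controlling it fails. From $\abs{M'(t)}=\abs{\mathcal{E}^\omega(q^\omega_t,\abs{\cdot})}\le\frac12\sum_{y,z}\omega(y,z)\abs{q^\omega_t(z)-q^\omega_t(y)}$ and Cauchy--Schwarz, the second factor is $\sum_y q^\omega_t(0,y)\mu^\omega(y)=E^\omega_0[(\mu^\omega/m^\omega)(Z_t)]=\frac{\d}{\d t}A^\omega(t)$. This is the expected jump rate along the path, not a spatial average of $\mu^\omega$. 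The bound $q^\omega_s\lesssim s^{-d/2}$ turns it into a box average only on $\{\abs{Z_s}\le L\sqrt s\}$. The remainder $E^\omega_0[(\mu^\omega/m^\omega)(Z_s);\abs{Z_s}>L\sqrt s]$ carries an unbounded, merely integrable weight. The on-diagonal bound cannot control it, since shell volumes grow like $R^d$ and no off-diagonal decay is available. A tail bound for $\abs{Z_s}$ cannot either, and that is what you are trying to prove anyway.

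The missing idea is the environment seen from the particle. The measure $\mathbb{Q}=\frac{m^\omega(0)}{\E[m^\omega(0)]}\mathbb{P}$ is invariant, $a=\mu^\omega(0)/m^\omega(0)\in L^1(\mathbb{Q})$, and $A^\omega(t)=\int_0^t T_s a\,\d s$. The Dunford--Schwartz pointwise ergodic theorem then gives $A^\omega(t)\le K^\omega t$ with no spatial localization. Since only this integrated bound is available, you apply Cauchy--Schwarz on dyadic blocks, $M(t)-M(t/2)\le\sqrt{(H^\omega(t)-H^\omega(t/2))(A^\omega(t)-A^\omega(t/2))/2}$. You then bootstrap with $H^\omega(t)-H^\omega(t/2)\le C_d''+d\log(1+M(t)/\sqrt t)$. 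This repairs Step~1, but only for the fixed-time quantity.

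Step~2 has an independent gap. The strong Markov argument at $\sigma$ needs displacement bounds uniform in the starting point $Z_\sigma$, and the killed on-diagonal bound cannot supply them. It only bounds occupation times such as $E^\omega_x[\tau_B]=\sum_{y\in B}g_B(x,y)m^\omega(y)$ from above, i.e.\ it says the walk leaves $B$ fast enough. Making $P^\omega_x(\tau_{B(x,R)}\le t)$ small instead requires a lower bound on exit times, which would need control of the rates $\mu^\omega/m^\omega$ near arbitrary $x$.

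Two further problems remain even if that were granted. First, a bound $P^\omega_0(\sup_{s\le t}\abs{Z_s}>\lambda)\lesssim C^\omega\sqrt t/\lambda$, integrated from $\sqrt t$ to a polynomial scale, yields $C^\omega\sqrt t\log t$, not $C^\omega\sqrt t$. Second, your jump-count cutoff only helps if $t\sup_{Q_\lambda}\mu^\omega/m^\omega\ll\lambda$, which a first moment does not guarantee.

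The paper avoids all of this. Its range--entropy--activity inequality bounds $E^\omega_0[\sup_{a\le s\le t}\abs{Z_s-Z_a}]$ directly for the walk started at $0$; the paper defers the proof of that inequality to a separate note. The bootstrap therefore runs on $U(t)=t^{-1/2}E^\omega_0[\sup_{s\le t}\abs{Z_s}]$ itself, via $\sup_{s\le t}\abs{Z_s}\le\sup_{s\le t/2}\abs{Z_s}+\sup_{t/2\le s\le t}\abs{Z_s-Z_{t/2}}$. No maximal inequality or uniformity in the starting point is needed.
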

\begin{lemma}[Confined path currents]\label{lemma:auxiliary_RW_confined_currents}
        In the setting of Proposition~\ref{proposition:subdiffusive_control_by_maximal_function_of_energy}, it holds for all \(s \geq 2\) and \(L \geq 1\) that
        \begin{align}
            E_0^\omega \Big[\mathbf{1}_{\tau_{Q_{\lceil L \sqrt{s} \rceil}} > s}  \sup_{s/2 \leq v \leq s} \abs{J^\omega_{v}(r) - J^\omega_{s/2}(r)} \Big]
            \leq C_d \sqrt{s L^d} \sqrt{\mathcal{M}_0(e_r^\omega)}.
        \end{align}
    \end{lemma}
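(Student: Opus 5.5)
The plan is to work on the time window $[s/2,s]$. We restart the walk at time $s/2$ using the Markov property, compare its law to the $m^\omega$-reversible killed walk on $D := Q_{\lceil L\sqrt s\rceil}$ via the heat-kernel bound of Lemma~\ref{lemma:uniform_smoothing_heat_kernel_density_bound}, and then control the path current of the reversible walk by a forward--backward (Lyons--Zheng type) martingale decomposition. Its quadratic variation is governed only by the physical energy $e_r^\omega$.

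\emph{Step 1 (change of measure).} On $\{\tau_D>s\}$ the walk stays in $D$ on $[0,s]$. The quantity $F:=\sup_{s/2\le v\le s}\abs{J_v^\omega(r)-J_{s/2}^\omega(r)}$ is a functional of $(Z_{s/2+u})_{u\in[0,s/2]}$ only. By the Markov property at time $s/2$,
\begin{align*}
E_0^\omega[\mathbf 1_{\tau_D>s}F] \le \sum_{x\in D} q^{\omega,D}_{s/2}(0,x)\, m^\omega(x)\, E_x^\omega[\mathbf 1_{\tau_D> s/2}\, \widetilde F],
\end{align*}
where $\widetilde F$ is the same supremum over the window $[0,s/2]$. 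Since $q^{\omega,D}_{s/2}(0,x)\lesssim_d s^{-d/2}$ by Lemma~\ref{lemma:uniform_smoothing_heat_kernel_density_bound}, the Cauchy--Schwarz inequality with the probability weights $q^{\omega,D}_{s/2}(0,x)m^\omega(x)$ (total mass $\le 1$) gives
\begin{align*}
E_0^\omega[\mathbf 1_{\tau_D>s}F] \le \Big(C_d\, s^{-d/2} \sum_{x\in D} m^\omega(x)\, E_x^\omega\big[\mathbf 1_{\tau_D>s/2}\widetilde F^2\big]\Big)^{1/2}.
\end{align*}
It is important to apply Cauchy--Schwarz in this weighted form rather than through $\sum_x m^\omega(x)\le\abs D$ directly: the latter would cost $L^d$ instead of $L^{d/2}$.

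\emph{Step 2 (reversible bound).} Let $\bar P^\omega_m:=\sum_{x\in D} m^\omega(x)P_x^\omega$ be the law of the walk killed on exiting $D$ and started from $m^\omega|_D$. This measure is reversible, since the killed generator is $m^\omega$-symmetric because $\lambda^\omega(x,y)m^\omega(x)=\omega(x,y)$ is symmetric. It remains to show
\begin{align*}
\bar E^\omega_m\big[\mathbf 1_{\tau_D>s/2}\widetilde F^2\big]\lesssim s\sum_{x\in D} e_r^\omega(x)\le s\,\abs D\,\mathcal M_0(e_r^\omega),
\end{align*}
where the last inequality holds because $D$ is a cube containing $0$. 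Combined with $\abs D\lesssim_d L^d s^{d/2}$, this yields exactly $C_d\sqrt{sL^d}\sqrt{\mathcal M_0(e_r^\omega)}$.

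For the reversible bound, write $J_v=M_v+\int_0^v \sum_y \lambda^\omega(Z_u,y)r(Z_u,y)\,du$ with $M$ a martingale. Under time reversal on $[0,s/2]$, which preserves $\bar P^\omega_m$ restricted to $\{\tau_D>s/2\}$, antisymmetry of $r$ turns $J$ into $-J$ while the drift term keeps its sign. Adding the two decompositions cancels the drift, so $J_v=\tfrac12\big(M_v-(\hat M_{s/2}-\hat M_{s/2-v})\big)$ with $\hat M$ the martingale of the reversed path. Doob's $L^2$ inequality bounds each supremum by the expected quadratic variation, that is, the compensator $\int_0^{s/2}\sum_y \lambda^\omega(Z_u,y)r(Z_u,y)^2\,du$. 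Under $\bar P^\omega_m$ its expectation is at most $\int_0^{s/2}\sum_{x\in D}\sum_y m^\omega(x)\lambda^\omega(x,y)r^2\,du=\tfrac s2\sum_{x\in D}e_r^\omega(x)$, using sub-invariance of $m^\omega$ for the killed semigroup. The division by $m^\omega$ in $\lambda^\omega$ is thus exactly compensated, which is why only the physical energy appears.

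\emph{Main obstacle.} I expect the main difficulty to be the rigorous Lyons--Zheng step for the \emph{killed} walk. We must check that time reversal on $\{\tau_D>s/2\}$ is measure preserving under the non-normalized measure $\bar P^\omega_m$. We must also check that killing does not spoil the martingale and compensator identities; I would stop both martingales at $\tau_D$, or equivalently work with the walk reflected in $D$ with conductances restricted to $D$, which is $m^\omega$-reversible and coincides with $Z$ before $\tau_D$. A further issue is that the drift cannot be controlled pathwise by summation by parts once absolute values are inside the supremum, and the forward--backward cancellation is precisely what circumvents this. Integrability is automatic since $D$ is finite and $r$ is finite on its edges.
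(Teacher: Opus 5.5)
Your proposal is correct and follows the paper's proof: the Markov property at $s/2$, the killed heat-kernel bound $P_0^\omega(Z_{s/2}=x,\tau_Q>s/2)\lesssim_d s^{-d/2}m^\omega(x)$ from Lemma~\ref{lemma:uniform_smoothing_heat_kernel_density_bound}, a forward--backward martingale bound in equilibrium, $\sum_{x\in Q}e_r^\omega(x)\le\abs{Q}\,\mathcal{M}_0(e_r^\omega)$, and Cauchy--Schwarz. The differences are cosmetic: the paper squares first and applies Cauchy--Schwarz at the end, which is equivalent to your weighted version, and it dominates the killed walk by the walk reflected in $Q$ in order to invoke Lemma~\ref{lemma:fluctuations_energy_for_reflected_walk_in_equilibrium}, whereas you run the Lyons--Zheng argument directly for the killed walk started from $m^\omega|_D$, which also works because $m^\omega$ is sub-invariant for the killed semigroup and the compensator of the martingale stopped at $\tau_D$ yields $\sum_{x\in D}e_r^\omega(x)$.
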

Conditional on these two lemmas, the elementary idea is to control the path current \(J(r)\) by combining 
\begin{enumerate}
    \item control of path current increments \(\sup_{s/2 \leq v \leq s} \abs{J^\omega_{v}(r) - J^\omega_{s/2}(r)}\) for \(Z\) confined inside boxes \(Q_{\lceil L \sqrt{s} \rceil}\) of diffusive scaling given by Lemma~\ref{lemma:auxiliary_RW_confined_currents},
    
    \item and diffusive range of the auxiliary random walk \(Z\) given by Lemma~\ref{lemma:auxiliary_RW_diffusive_range},
\end{enumerate} by dyadic decomposition of time intervals. The full proof of Proposition~\ref{proposition:subdiffusive_control_by_maximal_function_of_energy} is given in Section~\ref{section:proofs_reduction}.

Let us finally showcase the ideas and techniques behind Lemma~\ref{lemma:auxiliary_RW_diffusive_range} and Lemma~\ref{lemma:auxiliary_RW_confined_currents} in the following two sections.

\subsubsection{Diffusive range}

The basic mantra that we will exploit to control the range of our auxiliary random walk \(Z\) as in Lemma~\ref{lemma:auxiliary_RW_diffusive_range} is
\begin{align*}
    \text{motion} \lesssim \text{random fluctuations} + \text{directed transport}.
\end{align*}
Let us first discuss the quantities that will appear on the right-hand side of such an equation in our context, namely activity and entropy.

Denote by \(A^\omega(t)\) the expected \emph{activity}
\begin{align}
    A^\omega(t) := E_0^\omega[N_t],
\end{align} where \(N_t\) is the number of jumps of \(Z\) in the time interval \([0, t]\).
We have the following almost-sure bound on the activity.
\begin{lemma}\label{lemma:almost_sure_activity_bound}
    There is a \(\mathbb{P}\)-a.s.\ finite random variable \(K^\omega\) such that
    \begin{align}\label{equation:activity_bound}
        A^\omega(t) \leq K^\omega t, \quad t \geq 1.
    \end{align}
\end{lemma}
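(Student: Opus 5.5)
The plan is to write the activity as an integral of the total jump rate along the path and then control that integral with the heat-kernel bound of Lemma~\ref{lemma:uniform_smoothing_heat_kernel_density_bound} together with an ergodic average. Since \(N_t\) minus the compensator \(\int_0^t \lambda^\omega(Z_s)\,\d s\) is a martingale, where \(\lambda^\omega(x) := \sum_{y\sim x}\lambda^\omega(x,y) = \mu^\omega(x)/m^\omega(x)\), we have
\begin{align*}
A^\omega(t) = \int_0^t E_0^\omega\big[\lambda^\omega(Z_s)\big]\,\d s = \int_0^t \sum_{x} q^\omega_s(0,x)\,\mu^\omega(x)\,\d s .
\end{align*}
Here the factors \(m^\omega\) cancel: \(p_s^\omega(0,x)\,\mu^\omega(x)/m^\omega(x) = q^\omega_s(0,x)\,\mu^\omega(x)\). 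Non-explosiveness of \(Z\) justifies the compensator identity, using a localization by \(\tau_{Q_R}\) and monotone convergence if needed.

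Next I split the spatial sum at the diffusive scale \(R = R(s)\), to be chosen of order \(\sqrt{s}\), possibly with a logarithmic factor.
\begin{itemize}
\item \emph{Inside the box.} Lemma~\ref{lemma:uniform_smoothing_heat_kernel_density_bound} gives \(q_s^\omega(0,x)\lesssim_d s^{-d/2}\), so
\begin{align*}
\sum_{x\in Q_R} q_s^\omega(0,x)\,\mu^\omega(x) \lesssim s^{-d/2}\,|Q_R|\,\frac{1}{|Q_R|}\sum_{x\in Q_R}\mu^\omega(x) \lesssim s^{-d/2} R^d\,\mathcal{M}\mu^\omega(0).
\end{align*}
For \(R\asymp\sqrt{s}\) this is bounded by a constant times \(\mathcal{M}\mu^\omega(0)\). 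This quantity is a.s.\ finite by Lemma~\ref{lemma:weak_1_1}, or more simply by the ergodic theorem, since \(\mathbb{E}[\mu^\omega(0)]<\infty\) makes box averages converge and hence stay bounded.
\item \emph{Outside the box.} Here the smoothing bound alone does not suffice, and this is where I expect the main obstacle. The mass \(P_0^\omega(|Z_s|>R)\) must be weighted by \(\mu^\omega\), which is only in \(L^1\); we have no off-diagonal (Gaussian) heat-kernel bound, and no a priori range control, since Lemma~\ref{lemma:auxiliary_RW_diffusive_range} is presumably proved using this lemma.
\end{itemize}

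To avoid needing the location of \(Z\), I would replace the fixed-time split by a dyadic-shell argument. On each shell \(Q_{2^{k+1}}\setminus Q_{2^k}\), bound \(\int_0^t q_s^\omega(0,x)\,\d s\) by the Green function of \(Z\) killed on exiting a large box. Then use the bound \(\int_0^\infty q^{\omega,D}_s(0,x)\,\d s\), with the integral cut at time \(t\) and the smoothing bound used for \(s\ge |x|^2\). For \(d\ge3\) the Sobolev inequality of Lemma~\ref{lemma:sobolev_inequality} yields the Green-function estimate \(g^\omega(0,x)\lesssim |x|^{2-d}\) on average over shells. Concretely, \(\sum_{x\in \text{shell }k} g^{\omega}(0,x)\mu^\omega(x)\) is controlled by \(2^{2k}\) times the shell average of \(\mu^\omega\), which in turn is at most \(2^{2k}\,\mathcal{M}\mu^\omega(0)\). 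Only shells with \(2^{k}\lesssim\sqrt t\) contribute at the full Green-function rate; for the farther shells I would use the time cutoff \(s\le t\) together with \(q\lesssim s^{-d/2}\), reducing them to the near-shell estimate.

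Summing the geometric series over \(k\le \log_2\sqrt t\) gives \(\lesssim t\,\mathcal{M}\mu^\omega(0)\). Combining this with the near-box part, I would set \(K^\omega := C_d\,\mathcal{M}\mu^\omega(0)\), which is a.s.\ finite, and conclude the lemma.

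The genuinely delicate point is making the far-field part rigorous without Gaussian tails. The first thing I would try is to derive, from the Sobolev inequality via Davies' method, an exit-time or Green-function bound for \(Z\) started at 0, and check that this only costs a maximal average of \(\mu^\omega\).
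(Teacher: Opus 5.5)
\textbf{There is a genuine gap: the far field.} Your compensator identity $A^\omega(t)=\int_0^t\sum_x q^\omega_s(0,x)\,\mu^\omega(x)\,\d s$ is correct, and it is also the paper's starting point. Your near-box estimate through Lemma~\ref{lemma:uniform_smoothing_heat_kernel_density_bound} and $\mathcal{M}\mu^\omega(0)$ is fine too. The argument fails exactly where you suspect, in the far field, and the dyadic-shell scheme does not repair it.

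\begin{itemize}
\item The bound $q^\omega_s\lesssim_d s^{-d/2}$ has no spatial decay. On the infinitely many shells $S_k=Q_{2^{k+1}}\setminus Q_{2^k}$ with $2^k>\sqrt t$, the best it gives is $\sum_{\abs{x}>\sqrt t}\mu^\omega(x)\int_0^t\min\{1/m^\omega(x),C_d s^{-d/2}\}\,\d s=\infty$. So nothing ``reduces them to the near-shell estimate''.
\item Even granting $\sum_{x\in S_k}g^\omega(0,x)\,\mu^\omega(x)\lesssim 2^{2k}\mathcal{M}\mu^\omega(0)$, the sum over $k>\log_2\sqrt t$ diverges. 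What you actually need is joint space--time decay: by time $t$, $Z$ must have put little $\mu^\omega$-weighted mass at distance $\gg\sqrt t$. That is an exit-time or off-diagonal heat-kernel bound.
\item Nothing available supplies such a bound. Carne--Varopoulos controls displacement through the number of jumps, which is the quantity you are trying to bound. Davies' method only gives decay in the intrinsic metric of the unbounded rates $\omega(x,y)/m^\omega(x)$, which is not comparable to $\abs{\cdot}$.
\item Lemma~\ref{lemma:auxiliary_RW_diffusive_range} is itself derived from the activity bound, so using it here would be circular.
\item The Green-function claim is unsupported. The Sobolev inequality gives on-diagonal or integrated information (e.g.\ a bound on $g^\omega(0,0)$), not $g^\omega(0,x)\lesssim\abs{x}^{2-d}$. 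Moreover, an unweighted average over a shell does not control a $\mu^\omega$-weighted sum over it.
\item There is no reason to expect a deterministic bound $K^\omega=C_d\,\mathcal{M}\mu^\omega(0)$. Where $m^\omega$ is small, $Z$ is strongly sped up and can spread far beyond $\sqrt s$ by time $s$, which is precisely the regime you cannot control.
\end{itemize}

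\textbf{The missing idea: no spatial information is needed.} Read your formula as $A^\omega(t)=\int_0^t T_s a(\omega)\,\d s$. Here $(T_s)$ is the semigroup of the environment seen from $Z$, and $a(\omega)=\mu^\omega(0)/m^\omega(0)$.
\begin{itemize}
\item The tilted measure $\mathbb{Q}(\d\omega)=\frac{m^\omega(0)}{\mathbb{E}[m^\omega(0)]}\,\mathbb{P}(\d\omega)$ is invariant.
\item $a\in L^1(\mathbb{Q})$ exactly because $\mathbb{E}[\mu^\omega(0)]<\infty$, since the tilt cancels the $1/m^\omega$ in the rate.
\item By monotonicity and the semigroup property, $A^\omega(t)\le\sum_{n=0}^{\lceil t\rceil-1}T_1^n g$ with $g=\int_0^1 T_s a\,\d s\in L^1(\mathbb{Q})$.
\item $T_1$ is a positive contraction on both $L^1(\mathbb{Q})$ and $L^\infty(\mathbb{Q})$. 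Hence the Dunford--Schwartz pointwise ergodic theorem makes these Ces\`aro averages converge, and so stay bounded, $\mathbb{Q}$-a.s.
\item Equivalence of $\mathbb{Q}$ and $\mathbb{P}$ then yields the a.s.\ finite $K^\omega$.
\end{itemize}
This needs only the first moment of $\mu^\omega$, and it completely avoids the spatial localization your approach depends on.
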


Moving on, we consider \(H^\omega(t)\), the negative of the relative entropy  of \(p^\omega_t\) w.r.t. the speed measure \(m^\omega\), in the following just \emph{entropy}, given by
\begin{align}
    H^\omega(t)
    := - \sum_{x \in \Z^d} p^\omega_t(x) \log q^\omega_t(x)
\end{align} where we denote here \(p^\omega_t(x) := p^\omega_t(0, x)\) and \(q^\omega_t(x) := q^\omega_t(0, x) = \frac{p^\omega_t(x)}{m^\omega(x)}\).
This quantity is well-defined in our setting, and interpreted as weighted spread of the distribution \(p^\omega_t\).
\begin{lemma}[Entropy and de Bruijn-type identity]
    The entropy \(H^\omega(t)\) is a finite real number for every \(t \geq 0\) and fulfills \(H^\omega(0) = \log m^\omega(0)\) as well as a \emph{de Bruijn-type identity}
    \begin{align}
        H^\omega(t) - H^\omega(a)
        = \int_a^t \, I^\omega(s) \, \d s, \quad 0 < a < t,
    \end{align} with the non-negative \emph{entropy production}
    \begin{align}
        I^\omega(s)
        = \mathcal{E}^\omega(q^\omega_s, \log q^\omega_s).
    \end{align}
\end{lemma}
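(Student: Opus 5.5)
Three things need proof: $H^\omega(t)$ is finite, $H^\omega(0)=\log m^\omega(0)$, and the de Bruijn identity with $I^\omega(s)=\mathcal{E}^\omega(q^\omega_s,\log q^\omega_s)\ge 0$. The value at time $0$ is immediate. Since $p_0^\omega=\delta_0$, we get $H^\omega(0)=-\log(1/m^\omega(0))=\log m^\omega(0)$. Non-negativity of $I^\omega$ is also immediate: $\log$ is increasing, so each summand $\omega(x,y)[q(y)-q(x)][\log q(y)-\log q(x)]$ is $\ge 0$. Note $q_s^\omega>0$ everywhere for $s>0$, because the conductances are strictly positive and the chain is irreducible.

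\emph{Finiteness.} For the upper bound, Lemma~\ref{lemma:uniform_smoothing_heat_kernel_density_bound} gives $q_t^\omega\le C_d t^{-d/2}$. Hence $-\log q_t^\omega\ge \tfrac d2\log t-\log C_d$, so $H^\omega(t)$ is bounded below and its negative part is harmless. The danger is the positive part, i.e.\ sites where $q_t$ is tiny. Since $m^\omega\le 1$, we have $-p\log q=-p\log p+p\log m\le -p\log p$. So it suffices to show the Shannon entropy $-\sum_x p_t(x)\log p_t(x)$ is finite.

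I would get this from a first moment of the jump count. Let $N_t$ be the number of jumps. We have $E_0^\omega[N_t]=A^\omega(t)<\infty$ by Lemma~\ref{lemma:almost_sure_activity_bound}, and directly for small $t$ by monotonicity, since $A^\omega$ is nondecreasing. Also $|Z_t|\le N_t$. The standard inequality $-\sum p\log p\le \sum_x p(x)\,c(1+|x|)+\log\sum_x e^{-c(1+|x|)}$ (Gibbs' inequality against the reference law $\propto e^{-c|x|}$) then bounds the Shannon entropy by $c(1+E|Z_t|)+C$, which is finite. So $H^\omega(t)\in\R$.

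\emph{De Bruijn identity.} The plan is to prove it first for the killed walks on boxes $D_n=Q_n$ and then let $n\to\infty$; this is where I expect the main obstacle. On a finite $D$, the killed kernel $q^{D}_s=q_s^{\omega,D}(0,\cdot)$ is a finite-dimensional smooth solution of $\partial_s(m q^D_s)=\mathcal{L}^\omega_D q^D_s$ (by reversibility with respect to $m$, $\mathcal L^\omega$ acts as $m^{-1}\times$ the conductance Laplacian), with Dirichlet condition outside $D$. Set $H^D(s)=-\sum_x m q^D_s\log q^D_s$ and differentiate. This gives
\[
\tfrac{d}{ds}H^D(s)=-\sum_x \partial_s(mq^D_s)(\log q^D_s+1)=\mathcal{E}^\omega(q^D_s,\log q^D_s)+\text{boundary/mass-loss terms},
\]
after summation by parts. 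The boundary terms $\sum_{x\in D,y\notin D}\omega(x,y)q^D_s(x)(\log q^D_s(x)+1)$ do not vanish and lack a fixed sign. To avoid them I would instead exhaust in the other direction: work directly on $\Z^d$ and justify the term-by-term differentiation and the summation by parts by dominated convergence, on $[a,t]$ with $a>0$.

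The required domination has three parts. First, $\sum_x|\partial_s p_s(x)|\,|\log q_s(x)|$ must be locally uniformly summable. Here $|\partial_s p_s(x)|\le \sum_y \lambda(y,x)p_s(y)+\lambda(x)p_s(x)$, and $|\log q_s|\lesssim 1+|\log p_s|+4\log\mathcal{M}(\nu+1)$. Second, the tails are controlled by exponential moments in the number of jumps over $[a,t]$ from a site; the trouble is that the rates $\lambda$ are unbounded. Third, the $\log\mathcal{M}$ term is handled by noting that $\log\mathcal{M}(\nu+1)(x)\lesssim\log(1+|x|)+\log\mathcal{M}(\nu+1)(0)$ along paths, since cubes containing $0$ and $x$ compare at the cost of a volume factor. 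Concretely, I would bound the integrated version via $E_0^\omega[N_t(1+\log(1+N_t))]$ and the trivial $|\log p_s|$ control from the entropy estimate above.

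If this direct domination is too delicate, the fallback is a monotone limit using $D_n\uparrow\Z^d$. Note $q^{D_n}_s\uparrow q_s$. The map $s\mapsto\mathcal E(q_s,\log q_s)$ is handled by Fatou's lemma, which gives one inequality. The reverse inequality comes from lower semicontinuity of $H$ together with the finiteness shown above.
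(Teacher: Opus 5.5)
The paper states this lemma without proof. Presumably it feeds the range--entropy--activity inequality, whose proof is deferred to a separate note. So your proposal has to stand on its own.

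\textbf{What is correct.} Your arguments for $H^\omega(0)=\log m^\omega(0)$, for $q^\omega_s>0$, for $I^\omega\ge 0$ and for finiteness are correct. The finiteness argument is the same Gibbs comparison the paper uses in Lemma~\ref{lemma:entropy_comparison}.

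\textbf{The gap.} The de Bruijn identity itself is not established.
\begin{enumerate}
\item Your direct domination needs $E_0^\omega[N_t\log(1+N_t)]<\infty$. Lemma~\ref{lemma:almost_sure_activity_bound} only gives $E_0^\omega[N_t]<\infty$, and under first moments of $\mu^\omega$ an $N\log N$ bound would require a new argument.
\item The Shannon-entropy bound controls $\sum_x p_s\abs{\log p_s}$ without weights. You need $\sum_x\lambda^\omega(x)p_s(x)\abs{\log q_s(x)}$ and $\sum_y p_s(y)\sum_{x\sim y}\lambda^\omega(y,x)\abs{\log q_s(x)}$. The second evaluates the logarithm at a \emph{neighbour} of where the mass sits, so it requires a lower bound on $p_s$, which you never derive.
\item The fallback does not work either. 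With killed walks, the approximate identity contains exactly the sign-indefinite boundary term you identified, and the fallback silently ignores it. Fatou and lower semicontinuity give only one-sided bounds at each endpoint. These point in opposite directions in $H^\omega(t)-H^\omega(a)$, so no reverse inequality follows.
\end{enumerate}

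\textbf{How to close it.} Do not aim for full absolute summability; kill the boundary flux only along a subsequence of boxes. Integrate the forward equation $\partial_s p_s=\mathcal L^\omega q_s$ (valid for the minimal chain, since each site has finitely many neighbours) and sum over $x\in Q_R$:
\[
\sum_{x\in Q_R}\big[p_a\log q_a-p_t\log q_t\big](x)=\int_a^t \mathcal E_{Q_R}(q_s,\log q_s)\,\d s-\int_a^t B_R(s)\,\d s .
\]
Here $\mathcal E_{Q_R}$ is the energy of edges inside $Q_R$, and $B_R(s)=\sum_{x\in Q_R,\,y\notin Q_R}\omega(x,y)[q_s(y)-q_s(x)][\log q_s(x)+1]$.

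The left side tends to $H^\omega(t)-H^\omega(a)$ by your finiteness result. The first term on the right increases to $\int_a^t I^\omega$ by monotone convergence. Hence $\int_a^t B_R$ has a limit in $(-\infty,\infty]$, and it suffices to show $\liminf_R\int_a^t\abs{B_R}\,\d s=0$. Each neighbouring pair $(x,y)$ with $x\in Q_R\not\ni y$ occurs for exactly one integer $R$. So it is enough to prove
\[
\sum_R\frac{1}{(1+R)\log(2+R)}\int_a^t\abs{B_R(s)}\,\d s\le C^\omega_{a,t}\,E_0^\omega[N_t-N_a]<\infty .
\]

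This weighted bound holds once $\abs{\log q_s(x)}+1\le C^\omega_{a,t}(1+\abs x)\log(2+\abs x)$ for $s\in[a,t]$.
\begin{itemize}
\item The upper side of $\log q_s$ is controlled by Lemma~\ref{lemma:uniform_smoothing_heat_kernel_density_bound}.
\item On the lower side, $m^\omega\le 1$ gives $\log(1/q_s)\le\log(1/p_s)$, so the maximal function never hurts there.
\item The needed bound $p_s(x)\ge\exp(-C^\omega_{a,t}(1+\abs x)\log(2+\abs x))$ follows by forcing the walk along one monotone lattice path and letting it wait at the origin to fix the arrival time.
\item That path estimate uses $\omega(x,y)/\mu^\omega(x)\ge 1/(\mu^\omega\nu^\omega)(x)$.
\item It also uses polynomial growth of $\mu^\omega,\nu^\omega$ (Borel--Cantelli with first moments) and of $1/m^\omega$ (your maximal-function comparison).
\end{itemize}
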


Together, activity and entropy control the movement of \(Z\) as follows.
\begin{lemma}[Range-entropy-activity inequality]\label{lemma:range_entropy_activity_inequality}
    For all \(0 < a < t\) it holds that
    \begin{align}\label{equation:range_entropy_activity_inequality}
        E_0^\omega[\sup_{a \leq s \leq t} \abs{Z_s - Z_a}]
        \leq \bigg(3 + \sqrt{[H^\omega(t) - H^\omega(a)]/2} \bigg) \sqrt{A^\omega(t) - A^\omega(a)}.
    \end{align}
\end{lemma}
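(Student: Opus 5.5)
We split the motion of each coordinate into a martingale part and a drift part, and bound each by Cauchy--Schwarz against the activity and the entropy. Fix a coordinate \(j\) and write \(Z^j_s - Z^j_a = M^j_s + \int_a^s b^j(Z_u)\,\d u\). Here \(b^j(x) = \sum_{y\sim x}\lambda^\omega(x,y)(y_j-x_j)\) is the local drift and \(M^j\) is a martingale. Nearest-neighbour jumps have size \(1\) in one coordinate, so the predictable quadratic variation is \(\int_a^s \lambda^j(Z_u)\,\d u\), where \(\lambda^j(x)=\sum_{y\sim x,\,y-x=\pm e_j}\lambda^\omega(x,y)\). Summing over \(j\) gives the total jump rate \(\lambda(x)=\sum_y\lambda^\omega(x,y)\). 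Since \(E_0^\omega[N_t-N_a]=\int_a^t E_0^\omega[\lambda(Z_u)]\,\d u = A^\omega(t)-A^\omega(a)\), Doob's \(L^2\) inequality gives
\[
E_0^\omega\Big[\sup_{a\le s\le t}\abs{M_s}\Big]\le 2\sqrt{A^\omega(t)-A^\omega(a)}.
\]
Non-explosiveness is needed here; we handle it by localizing with exit times of finite boxes and applying monotone convergence. This accounts for the constant \(2\) (up to \(3\)) in the bracket, so the ``random fluctuations'' term is done.

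For the drift (``directed transport''), we bound \(E_0^\omega\big[\sup_s\abs{\int_a^s b(Z_u)\,\d u}\big]\le\int_a^t\sum_x p_u(x)\abs{b(x)}\,\d u\). We rewrite the inner sum using \(q_u=p_u/m^\omega\) and \(\lambda^\omega(x,y)m^\omega(x)=\omega(x,y)\):
\[
\sum_x p_u(x)b(x)\cdot v=\sum_x\sum_{y\sim x} q_u(x)\,\omega(x,y)\,(y-x)\cdot v .
\]
To exploit symmetry we symmetrize this over edges. Each edge \(\{x,y\}\) contributes \(\omega(x,y)(y-x)\cdot v\,[q_u(x)-q_u(y)]\). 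This equals the rate of change of \(\sum_x p_u(x)\,x\cdot v\), since that function has generator image \(b\cdot v\). Hence \(\frac{\d}{\d u}E[Z_u]=-\tfrac12\sum_{x,y}\omega(x,y)(y-x)[q_u(y)-q_u(x)]\).

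The absolute value inside the expectation needs a better tool than this symmetrized identity. The standard fix is to write \(\abs{\cdot}\) via a sign and use the inequality \((q(y)-q(x))^2\le \tfrac12(q(x)+q(y))(q(y)-q(x))(\log q(y)-\log q(x))\). Cauchy--Schwarz then bounds the edge sum by
\[
\Big(\tfrac12\sum\omega(x,y)(q_u(x)+q_u(y))\Big)^{1/2}\Big(\sum\omega(x,y)(q_u(y)-q_u(x))(\log q_u(y)-\log q_u(x))\Big)^{1/2}.
\]
The first factor equals \(\big(E_0^\omega[\lambda(Z_u)]\big)^{1/2}\), i.e. the square root of the activity rate. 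The second is \(\sqrt{2I^\omega(u)}\) up to constants. Integrating in \(u\) and applying Cauchy--Schwarz in time gives \(\sqrt{(A^\omega(t)-A^\omega(a))(H^\omega(t)-H^\omega(a))}\) times a constant. The de Bruijn-type identity converts \(\int I^\omega\) into the entropy increment.

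The main obstacle is this drift term. We must obtain the supremum over \(s\) inside the expectation with the absolute value, and not merely the mean displacement, and still keep constants as sharp as \(1/\sqrt2\). Our first attempt is to avoid the absolute value issue entirely by a different decomposition. For the nonstationary process we use a Lyons--Zheng-type forward/backward martingale decomposition relative to the time-inhomogeneous law \(p_u\). The drift part becomes the time-reversal drift, expressed through \(\nabla\log q_u\) along edges. Its pathwise \(L^1\) norm is then bounded by Cauchy--Schwarz with expectations over the path measure, which produces exactly \(I^\omega\) and the activity. Careful bookkeeping of the edge sums and the \(\tfrac12\) factors should yield the stated constants \(3\) and \(\sqrt{\cdot/2}\). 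If a constant factor is lost, only the constants change and the downstream applications are unaffected.
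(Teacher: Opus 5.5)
Your forward route for the drift fails, and this is a genuine gap, not just a loss of constants. The local drift $b(x)=m^\omega(x)^{-1}\sum_{y\sim x}\omega(x,y)(y-x)$ of the conductance walk does not vanish in general. Take a periodic environment with conductances alternating between $1$ and $2$ along $e_1$: then $\abs{b}\geq 1$ everywhere (recall $m^\omega\leq 1$), so $\int_a^t E_0^\omega\abs{b(Z_u)}\,\d u\geq t-a$ grows linearly, whereas the lemma is a diffusive, $\sqrt{t-a}$-type bound. Your summation by parts is valid only against a fixed vector $v$. It therefore controls $\abs{E_0^\omega[Z_t-Z_a]}$, but not $E_0^\omega\abs{Z_t-Z_a}$, and certainly not the supremum. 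Once you insert an $x$-dependent sign $\sigma(x)$, the edge term becomes $\omega(x,y)(y-x)[\sigma(x)q^\omega_u(x)-\sigma(y)q^\omega_u(y)]$. This is not a difference of $q^\omega_u$, so the log-mean inequality has nothing to act on. The per-time bound you implicitly claim, $\sum_x p^\omega_u(x)\abs{b(x)}\lesssim\sqrt{\partial_uA^\omega(u)\,I^\omega(u)}$, is false: the left side does not see gradients of $q^\omega_u$ at all. The forward drift only cancels against the drift of the time-reversed walk, so this part has to be discarded, not refined.

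Your closing Lyons--Zheng remark is the right route. Carried out, it produces exactly the stated constants, $3=\tfrac12(2+4)$ and $\sqrt{\cdot/2}$. The paper defers its own proof of this lemma to a separate note, so there is no written proof to compare against.

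Since all conductances are positive, $q^\omega_u>0$ for $u\geq a>0$; this is why $a>0$ is needed. By detailed balance $m^\omega(x)\lambda^\omega(x,y)=\omega(x,y)$, the reversal $\hat Z_r=Z_{t-r}$, $r\in[0,t-a]$, is a time-inhomogeneous chain. At forward time $u=t-r$ it has rates $\hat\lambda_u(y,x)=\lambda^\omega(y,x)\,q^\omega_u(x)/q^\omega_u(y)$, drift $\hat b_u(y)=\sum_{x\sim y}\hat\lambda_u(y,x)(x-y)$, and martingale part $\hat M$ in the reversed filtration. Adding the forward and backward decompositions gives
\begin{align*}
2(Z_s-Z_a)&=(M_s-M_a)+(\hat M_{t-s}-\hat M_{t-a})+\int_a^s (b-\hat b_u)(Z_u)\,\d u,\\
(b-\hat b_u)(y)&=\sum_{x\sim y}\frac{\omega(x,y)\,(x-y)\,[q^\omega_u(y)-q^\omega_u(x)]}{m^\omega(y)\,q^\omega_u(y)}.
\end{align*}
The surviving drift is thus the difference of the forward and backward drifts, not the reversal drift alone. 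Consequently $E_0^\omega\int_a^t\abs{b-\hat b_u}(Z_u)\,\d u\leq\int_a^t\sum_{y}\sum_{x\sim y}\omega(x,y)\abs{q^\omega_u(x)-q^\omega_u(y)}\,\d u$, which now involves only differences of $q^\omega_u$. Only here do your log-mean inequality and Cauchy--Schwarz apply. Together with de Bruijn they bound this by $\int_a^t\sqrt{\partial_uA^\omega(u)}\sqrt{2I^\omega(u)}\,\d u\leq\sqrt{2(A^\omega(t)-A^\omega(a))(H^\omega(t)-H^\omega(a))}$.

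Both martingales have expected bracket $A^\omega(t)-A^\omega(a)$, since the reversed path makes the same unit jumps. Doob then gives $2\sqrt{A^\omega(t)-A^\omega(a)}$ for $\sup_s\abs{M_s-M_a}$. It gives $4\sqrt{A^\omega(t)-A^\omega(a)}$ for $\sup_s\abs{\hat M_{t-s}-\hat M_{t-a}}$, using $\sup_s\abs{\hat M_{t-s}-\hat M_{t-a}}\leq 2\sup_r\abs{\hat M_r}$. Halving everything yields the lemma.

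Two things remain for you to justify. First, the Markov property and the rates of the reversed chain, via the forward equation of the non-explosive walk. Second, the martingale property of $\hat M$ in the reversed filtration.
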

A proof of Lemma~\ref{lemma:range_entropy_activity_inequality} will appear in a separate note.

Now, we already saw in Lemma~\ref{lemma:almost_sure_activity_bound} that up to random constants, the activity behaves like
\begin{align*}
    A^\omega(t) \lesssim t,
\end{align*} so that it remains to control the entropy to get from Lemma~\ref{lemma:range_entropy_activity_inequality} to the bound in Lemma~\ref{lemma:auxiliary_RW_diffusive_range}.
We will show that up to random constants, the entropy behaves like \(\frac{d}{2} \log(t)\). This is to be expected heuristically, as \(p^\omega_t(x) / m^\omega(x)\) should be roughly uniform on a volume of \((\sqrt{t}^d) = t^{d/2}\) because the random walk spreads over a distance \(\sqrt{t}\), so that
\begin{align*}
    -\sum_x p^\omega_t(x) \log \frac{p^\omega_t(x)}{m^\omega(x)}
    \approx \frac{d}{2} \sum_x  p^\omega_t(x) \log(t)
    = \frac{d}{2} \log(t).
\end{align*} A corresponding rigorous lemma can be found as Lemma~\ref{lemma:entropy_comparison}.
Hence, there would be a disruptive \(\sqrt{\log(t)}\)-factor if we naively applied Lemma~\ref{lemma:range_entropy_activity_inequality} with \(a = 0\). Luckily, \(\log(t) - \log(t/2) = \log(2)\), which will allow us in Section~\ref{section:diffusive_range} to bootstrap \eqref{equation:range_entropy_activity_inequality} to a proof of Lemma~\ref{lemma:auxiliary_RW_diffusive_range}.

\subsubsection{Confined path currents are energy-controlled}

The proof of Lemma~\ref{lemma:auxiliary_RW_confined_currents} rests on the following argument.
First, we bound fluctuations along random walk paths not for the walk which starts at the origin \(0\) and is killed upon leaving the cube \(Q\), but the correspondingly reflected random walk, i.e. suppressing jumps to the outside of \(Q\), started from its reversible probability measure \(\pi^\omega_{Q}(x) = \frac{m^\omega(x)}{m^\omega(Q)}\). 
\begin{lemma}[Fluctuations-energy bound for the reflected walk in equilibrium]\label{lemma:fluctuations_energy_for_reflected_walk_in_equilibrium}
    For all antisymmetric measurable \(r \colon \Omega \times \Z^d \times \Z^d \to \R\) and all \(h \geq 0\), we have
    \begin{align}\label{equation:fluctuations_energy_for_reflected_walk_in_equilibrium}
        E^{\mathrm{reflected}}_{\pi_Q^\omega}\big[ \sup_{s \leq h} \abs{J^\omega_s(r)}^2 \big]
        \lesssim \frac{h}{m^\omega(Q)} \sum_{x, y \in Q} \omega(x, y) \abs{r(\omega, x, y)}^2.
    \end{align}
\end{lemma}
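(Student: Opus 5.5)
We plan to prove the bound via the classical Lyons--Zheng forward--backward martingale decomposition for reversible Markov processes. The reflected walk \(Y\) on \(Q\) with rates \(\lambda^\omega(x,y)=\omega(x,y)/m^\omega(x)\) for \(x,y\in Q\) is reversible with respect to \(\pi^\omega_Q\), and started from \(\pi^\omega_Q\) it is stationary. Consequently, on \([0,h]\) the time-reversed process \(\hat Y_s := Y_{(h-s)-}\) has the same law as \(Y\) (up to the usual right-continuity modification).

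First, we compensate the path current. Set
\begin{align*}
b(x) := \sum_{y\in Q}\lambda^\omega(x,y)\, r(\omega,x,y)
= \frac{1}{m^\omega(x)}\sum_{y\in Q}\omega(x,y)\, r(\omega,x,y),
\end{align*}
so that \(M_s := J_s(r) - \int_0^s b(Y_u)\,\d u\) is a square-integrable martingale. Its predictable quadratic variation is
\begin{align*}
\langle M\rangle_s = \int_0^s \sum_{y}\lambda^\omega(Y_u,y)\,\abs{r(\omega,Y_u,y)}^2\,\d u.
\end{align*}

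Next, we use reversal. Antisymmetry of \(r\) gives \(J_h(r)\circ(\text{reversal}) = -J_h(r)\), and more precisely \(J_{h}(r)-J_{h-s}(r)\) computed along \(Y\) equals \(-\hat J_{s}(r)\) along \(\hat Y\). Writing the analogous decomposition \(\hat M_s\) for \(\hat Y\) with the same drift \(b\), the drift integrals cancel when we add the forward and backward decompositions. This yields
\begin{align*}
2J_s(r) = M_s - \big(\hat M_h - \hat M_{h-s}\big).
\end{align*}
The only point needing care is the bookkeeping of jump times at the endpoints, which is harmless since \(Y\) almost surely has no jump at a fixed time.

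Then we apply Doob's \(L^2\) inequality to each martingale. This gives
\begin{align*}
E_{\pi}\Big[\sup_{s\le h}\abs{J_s(r)}^2\Big] \le C\Big(E_\pi\big[\sup_{s\le h}\abs{M_s}^2\big] + E_\pi\big[\sup_{s\le h}\abs{\hat M_s}^2\big]\Big) \le C'\, E_\pi\big[\langle M\rangle_h\big],
\end{align*}
where the backward term is handled identically because \(\hat Y\) has the same law as \(Y\). By stationarity,
\begin{align*}
E_\pi[\langle M\rangle_h]
= h\sum_{x\in Q}\frac{m^\omega(x)}{m^\omega(Q)}\sum_{y\in Q}\frac{\omega(x,y)}{m^\omega(x)}\,\abs{r(\omega,x,y)}^2
= \frac{h}{m^\omega(Q)}\sum_{x,y\in Q}\omega(x,y)\,\abs{r(\omega,x,y)}^2.
\end{align*}
The factors \(m^\omega\) cancel exactly, which is precisely why the bound involves the physical conductance energy. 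This is the claim.

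The main obstacle is rigorously justifying the reversal identity with the drift cancellation. If the right-hand side is infinite there is nothing to prove. Otherwise \(b\) need not be bounded, but \(Q\) is finite, so all rates and \(r\)-values on \(Q\) are finite and the cancellation is purely algebraic. The remaining work is to check the sign conventions and the supremum over \(s\), which is covered by the supremum over \(s\) of \(\hat M_h-\hat M_{h-s}\) being at most \(2\sup_s\abs{\hat M_s}\).
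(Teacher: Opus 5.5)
Your proposal is correct and takes the route the paper indicates: the stationary reversible reflected walk, the forward--backward decomposition $2J_s(r) = M_s - (\hat M_h - \hat M_{h-s})$ in which the drift cancels because $r$ is antisymmetric, Doob's $L^2$ inequality, and the stationarity computation of $E_{\pi}[\langle M\rangle_h]$ in which the factors $m^\omega$ cancel. The paper only sketches this in one sentence, and your write-up supplies the details correctly, including the endpoint and left-limit bookkeeping in the reversal identity.
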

The inequality \eqref{equation:fluctuations_energy_for_reflected_walk_in_equilibrium} can be proved via a forward-backward martingale decomposition of \(J^\omega(r)\) under the reversible process described above, where drifts cancel because \(r\) is antisymmetric, and Doob's martingale inequality.

Let us explain shortly how this result transfers to the energy-control over fluctuations along paths of the killed random walk started at the origin \(0\), from time \(s/2\) to time \(s\), given in Lemma~\ref{lemma:auxiliary_RW_confined_currents}. This is possible via the Markov property at time \(s/2\) and the diffusivity \(P_0^\omega(Z_{s/2} = x, \tau_Q > s/2) \lesssim_d s^{-d/2} m^\omega(x)\) over the invariant measure \(\pi^\omega_{Q}(x) = \frac{m^\omega(x)}{m^\omega(Q)}\) from Lemma~\ref{lemma:uniform_smoothing_heat_kernel_density_bound} which could already develop until the time \(s/2\).
A rigorous implementation of this idea is recorded in Section~\ref{section:confined_path_currents}.

\section{Proofs}

\subsection{The Sobolev inequality}\label{section:proofs_the_sobolev_inequality}

The Sobolev inequality Lemma~\ref{lemma:sobolev_inequality} is the appropriate analogue of \cite[Theorem 2]{ba2015sobolev} in our discrete and infinite-volume setting. We split the proof into the following small lemmas.

First, we get the following
\begin{lemma}[Pointwise control]
    In the setting of Lemma~\ref{lemma:sobolev_inequality},
    \begin{align}\label{equation:pointwise_control_sobolev}
        \abs{f(x)}
        \lesssim_d [\mathcal{M}(1+\nu^\omega)(x)]^{1/2} \mathcal{E}^\omega(f, f)^{1/d} [\mathcal{M}e_f^\omega(x)]^{1/p}.
    \end{align}
\end{lemma}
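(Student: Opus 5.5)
We prove the bound by the standard Hedberg-type splitting argument, with the weight $\nu^\omega$ inserted through Cauchy--Schwarz on each edge.

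\textbf{Step 1: telescoping.} Fix $x$ and $f$ finitely supported, and let $Q_k := x + Q_{2^k}$. Write $\bar f_Q$ for the average of $f$ over a cube $Q$. Since $f$ has finite support, $\bar f_{Q_k} \to 0$ as $k\to\infty$. Hence for every $k_0 \geq 0$,
\begin{align*}
    \abs{f(x)} \leq \abs{f(x) - \bar f_{Q_{k_0}}} + \sum_{k \geq k_0} \abs{\bar f_{Q_k} - \bar f_{Q_{k+1}}}.
\end{align*}

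\textbf{Step 2: weighted discrete Poincaré inequality.} For a cube $Q$ of side $R$ we use the $L^1$ Poincaré inequality on the lattice cube,
\begin{align*}
    \frac{1}{\abs{Q}}\sum_{y\in Q}\abs{f(y)-\bar f_Q} \lesssim_d R\,\frac{1}{\abs{Q}}\sum_{e\subset Q}\abs{\nabla_e f}.
\end{align*}
The pointwise deviation $\abs{f(x)-\bar f_Q}$ is handled the same way, by a Riesz-potential or chaining bound: $\abs{f(x) - \bar f_{Q_{k_0}}}$ is bounded by a sum over dyadic scales of averaged gradients. At each edge we apply Cauchy--Schwarz,
\begin{align*}
    \abs{\nabla_e f} = \omega_e^{-1/2}\,\omega_e^{1/2}\abs{\nabla_e f}.
\end{align*}
Summed over a cube this gives
\begin{align*}
    \frac{1}{\abs{Q}}\sum_{e\subset Q}\abs{\nabla_e f} \leq \Big(\frac{1}{\abs{Q}}\sum_{Q}\nu^\omega\Big)^{1/2}\Big(\frac{1}{\abs{Q}}\sum_{Q} e^\omega_f\Big)^{1/2}.
\end{align*}
Since $x\in Q_k$ for every $k$, the right-hand side is at most
\begin{align*}
    [\mathcal M(1+\nu^\omega)(x)]^{1/2}\,A_k, \qquad A_k := \Big(\frac{1}{\abs{Q_k}}\sum_{Q_k} e^\omega_f\Big)^{1/2}.
\end{align*}
The same weight factor also controls the Riesz-potential chaining, where the averages at each dyadic scale are taken over cubes containing $x$. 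Each term $\abs{\bar f_{Q_k}-\bar f_{Q_{k+1}}}$ is thus bounded by $2^k [\mathcal M(1+\nu^\omega)(x)]^{1/2}A_{k+1}$ up to constants.

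\textbf{Step 3: two bounds on $A_k$.} Let $\mathcal{M}_x := [\mathcal M(1+\nu^\omega)(x)]^{1/2}$. For small scales we use $A_k \leq (\mathcal Me_f^\omega(x))^{1/2}$. For large scales we use $A_k \leq 2^{-kd/2}(2\mathcal E^\omega(f,f))^{1/2}$, since the total energy satisfies $\sum_y e^\omega_f(y) = 2\mathcal E^\omega(f,f)$. Summing over scales yields
\begin{align*}
    \abs{f(x)} \lesssim_d \mathcal{M}_x\Big(2^{k_0}(\mathcal Me_f^\omega(x))^{1/2} + 2^{k_0(1-d/2)}\mathcal E^\omega(f,f)^{1/2}\Big).
\end{align*}
The second geometric series converges because $d \geq 3$.

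\textbf{Step 4: optimizing the scale.} We choose $2^{k_0} \approx \big(\mathcal E/\mathcal Me_f^\omega(x)\big)^{1/d}$. If this is less than $1$, we take $k_0=0$ and use the trivial bound $\mathcal E \leq \mathcal Me^\omega_f(x)$ type comparisons, adjusting as needed. With this choice, both terms become
\begin{align*}
    \mathcal E^{1/d}\,(\mathcal Me_f^\omega(x))^{1/2-1/d}.
\end{align*}
Since $1/2-1/d = 1/p$, this is exactly the claimed bound. The degenerate case $\mathcal Me^\omega_f(x) = 0$ forces $f \equiv 0$ near $x$ and hence $f(x)=0$.

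\textbf{Main obstacle.} The delicate step is the discrete pointwise oscillation bound $\abs{f(x)-\bar f_{Q_{k_0}}}$ in Step 2. It requires a chaining or Riesz-potential representation on the lattice in which the edge weights are averaged only over cubes containing $x$. That restriction is what allows them to be dominated by the maximal function at $x$. We would first try the classical dyadic chain $x \in Q_0 \subset Q_1 \subset \dots$, since this naturally bounds the pointwise difference by the averaged differences at each scale.
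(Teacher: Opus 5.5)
Your argument is correct and is essentially the paper's proof. Both use dyadic telescoping around $x$ with the $L^1$-Poincaré inequality, Cauchy--Schwarz to split the gradient into $\nu^\omega$ and $e_f^\omega$, and the Hedberg-type bound $\mathrm{avg}_{x+Q_{2^j}} e_f^\omega \le \min\{\mathcal{M}e_f^\omega(x), 2^{-jd+1}\mathcal{E}^\omega(f,f)\}$, which the paper sums directly rather than through an explicit cut-off $k_0$.

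The step you flag as the main obstacle is harmless on the lattice. The chain bottoms out at the fixed-size cube $x+Q_1$, where $\abs{f(x)-\bar f_{x+Q_1}}$ is bounded trivially by nearest-neighbor differences. The vague degenerate case in your Step 4 is settled by $\mathcal{M}e_f^\omega(x) \le 2\mathcal{E}^\omega(f,f)$, so the optimal scale is never below order one.
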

\begin{proof}
    For the sake of this proof, denote
    \begin{align*}
        a_f(z) = \sum_{y \sim z} \abs{f(y) - f(z)}, 
        \quad e^\omega_f(z) = \sum_{y \sim z} \omega(z, y) \abs{f(z)-f(y)}^2,
        \quad \mathrm{avg}_Q g = \frac{1}{\abs{Q}} \sum_{z \in Q} g(z).
    \end{align*}
    First notice that for the dyadic cubes \(\mathfrak{Q}_j = x + Q_{2^j}\), we have by telescoping, \(\mathrm{avg}_{\mathfrak{Q}_j} f \xrightarrow[j \to \infty]{} 0\) for finitely supported \(f\), and the discrete \(L^1\)-Poincaré inequality \cite[Proposition 12.3]{HajlaszKoskela2000}, the bound
    \begin{align}
        \abs{f(x)}
        \lesssim_d \sum_{j \geq 0} 2^j \mathrm{avg}_{\mathfrak{Q}_j} a_f.
    \end{align}
    Applying Cauchy-Schwarz first at a vertex and then in the cube, we get
    \begin{align}
        \mathrm{avg}_{Q} a_f
        \leq \sqrt{\mathrm{avg}_{Q} \nu^\omega} \sqrt{\mathrm{avg}_{Q} e_f^\omega}.
    \end{align}
    It follows that
    \begin{align}
        \abs{f(x)}
        &\lesssim_d \sum_{j \geq 0} 2^j \sqrt{\mathrm{avg}_{\mathfrak{Q}_j} \nu^\omega} \sqrt{\mathrm{avg}_{\mathfrak{Q}_j} e_f^\omega} \\
        &\leq \sqrt{\mathcal{M}(\nu^\omega + 1)(x)} \sum_{j \geq 0} 2^j \sqrt{\min\{\mathcal{M}e_f^\omega(x), 2^{-jd +1}\mathcal{E}^\omega(f, f) \}} \nonumber\\
        &\lesssim_d  \sqrt{\mathcal{M}(\nu^\omega + 1)(x)} \mathcal{E}^\omega(f, f)^{1/d} [\mathcal{M}e_f^\omega(x)]^{1/p}. \nonumber
    \end{align}
\end{proof}

From this, it follows a
\begin{lemma}[Weak Sobolev inequality]\label{lemma:weak_sobolev_inequality}
    In the setting of Lemma~\ref{lemma:sobolev_inequality},
    \begin{align}\label{equation:weak_sobolev_inequality}
         m^\omega_\beta(\{\abs{f} > \lambda \})
        \lesssim_{\beta, d} \lambda^{-p} \, \mathcal{E}^\omega(f, f)^{p/2}.
    \end{align}
\end{lemma}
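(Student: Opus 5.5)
On the level set \(\{\abs{f} > \lambda\}\), the pointwise control \eqref{equation:pointwise_control_sobolev} forces either \(\mathcal{M}(1+\nu^\omega)\) or \(\mathcal{M}e_f^\omega\) to be large. The weight \(m^\omega_\beta = [\mathcal{M}(1+\nu^\omega)]^{-\beta}\) penalizes the first alternative, and the weak-(1,1) inequality for the maximal operator controls the second. Throughout, write \(E := \mathcal{E}^\omega(f,f)\) and \(g := \mathcal{M}(1+\nu^\omega)\), so that \(g \geq 1\) and \(m^\omega_\beta = g^{-\beta}\).

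By \eqref{equation:pointwise_control_sobolev}, at any \(x\) with \(\abs{f(x)} > \lambda\) we have
\begin{align*}
  \lambda \lesssim_d g(x)^{1/2} E^{1/d} [\mathcal{M}e_f^\omega(x)]^{1/p}.
\end{align*}
Since \(1/d = 1/2 - 1/p\), this rearranges to
\begin{align*}
  \mathcal{M}e_f^\omega(x) \gtrsim_d \lambda^p E^{1-p/2} g(x)^{-p/2}.
\end{align*}
I decompose dyadically according to the size of \(g\). For \(k \geq 0\) let
\begin{align*}
  S_k := \{x : 2^k \leq g(x) < 2^{k+1},\ \abs{f(x)} > \lambda\}.
\end{align*}
On \(S_k\) we have \(m^\omega_\beta \leq 2^{-k\beta}\) and \(\mathcal{M}e_f^\omega \gtrsim \lambda^p E^{1-p/2} 2^{-(k+1)p/2}\).

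Next I apply the deterministic discrete weak-(1,1) inequality for the Hardy--Littlewood maximal function on \(\Z^d\) with counting measure. This follows from a Vitali covering argument with cubes; it is the non-random analogue of Lemma~\ref{lemma:weak_1_1}. Using \(\sum_x e_f^\omega(x) = 2E\), it gives
\begin{align*}
  \abs{S_k} \leq \#\{\mathcal{M}e_f^\omega \gtrsim \lambda^p E^{1-p/2} 2^{-kp/2}\} \lesssim_d \frac{2E}{\lambda^p E^{1-p/2} 2^{-kp/2}} = 2\lambda^{-p} E^{p/2} 2^{kp/2}.
\end{align*}
Therefore
\begin{align*}
  m^\omega_\beta(\{\abs{f}>\lambda\}) \leq \sum_{k\geq 0} 2^{-k\beta}\abs{S_k} \lesssim_d \lambda^{-p}E^{p/2}\sum_{k\geq0} 2^{-k(\beta - p/2)}.
\end{align*}
The geometric series converges exactly because \(\beta > p/2\), and its sum is a constant depending only on \(\beta\) and \(d\). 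This gives \eqref{equation:weak_sobolev_inequality}.

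The points needing care are minor. If \(E = 0\), then \(f\) is constant and finitely supported, hence \(f = 0\) and the claim is trivial. The sets \(S_k\) exhaust \(\{\abs{f}>\lambda\}\) because \(1 \leq g < \infty\) everywhere by the hypothesis of Lemma~\ref{lemma:sobolev_inequality}. The main obstacle I expect is justifying the deterministic weak-(1,1) bound for \(\mathcal{M}\) defined with centered-at-arbitrary-point (uncentered) lattice cubes. I would handle it by the standard \(3^d\)-Vitali covering lemma applied to a finite subcollection of cubes; finiteness is available since \(e_f^\omega\) is finitely supported, so only finitely many relevant points and bounded cube sizes occur. Everything else is the exponent bookkeeping \(1/d = 1/2 - 1/p\) above.
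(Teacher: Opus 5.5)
Your proof is correct and is essentially the paper's argument: the paper likewise uses the pointwise control to reduce \(\{\abs{f}>\lambda\}\) to a level set of \(\mathcal{M}e_f^\omega\) with threshold \(\lambda^p \mathcal{E}^\omega(f,f)^{-p/d}(m_1^\omega)^{p/2}\). This is your \(E^{1-p/2}g^{-p/2}\), since \(-p/d = 1-p/2\); the paper then decomposes dyadically according to \(m_1^\omega = g^{-1} \in (2^{-j-1},2^{-j}]\) (exactly your sets \(S_k\)), applies the weak-(1,1) maximal inequality for counting measure on each layer, and sums the geometric series using \(\beta > p/2\), with your remarks on \(\mathcal{E}^\omega(f,f)=0\) and the finite Vitali covering for uncentered cubes being correct details the paper leaves implicit.
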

\begin{proof}
    Indeed, by \eqref{equation:pointwise_control_sobolev}, we have, using the weak-(1,1) maximal inequality for counting measure,
    \begin{align}
        &m_\beta^\omega(\{\abs{f} > \lambda \})
        \leq m_\beta^\omega(\{\mathcal{M}e_f^\omega > C_d \lambda^p \mathcal{E}^\omega(f,f)^{-p/d} (m_1^\omega)^{p/2} \}) \\
        &\leq \sum_{j \geq 0} \sum_{x \colon m^\omega_1(x) \in (2^{-j-1}, 2^{-j}]} m^\omega_\beta(x) \mathbf{1}_{\mathcal{M}e^\omega_f(x) > C_d \lambda^p \mathcal{E}^\omega(f,f)^{-p/d} 2^{-(j+1)p/2}} \nonumber\\
        &\lesssim_d \sum_{j \geq 0} 2^{-j(\beta - \frac{p}{2})} \lambda^{-p} \mathcal{E}^\omega(f,f)^{p/d} \sum_{x \in \Z^d}  e_f^\omega(x) \nonumber\\
        &\lesssim_{\beta} \lambda^{-p} \mathcal{E}^\omega(f,f)^{p/d + 1}
        = \lambda^{-p} \mathcal{E}^\omega(f,f)^{p/2}. \nonumber
    \end{align}
\end{proof}

We conclude with a
\begin{proof}[Proof of Lemma~\ref{lemma:sobolev_inequality}]
    As presented in \cite[Theorem 1, Theorem 2]{Hajlasz2001Sobolev}, the classical Maz'ya truncation argument allows us to upgrade the weak Sobolev inequality from Lemma~\ref{lemma:weak_sobolev_inequality} to a strong one.
\end{proof}
\begin{remark}
    The strong Sobolev inequality is actually not necessary for our application. Indeed, the Nash inequality \eqref{equation:nash_inequality} already follows from the weak Sobolev inequality \eqref{equation:weak_sobolev_inequality} via the layer-cake representation of \(\Vert f \Vert_{\ell^2(m^\omega)}^2\) as
    \begin{align}
        &\Vert f \Vert_{\ell^2(m^\omega)}^2
        = 2 \int_0^\infty  \lambda \, m^\omega(\{\abs{f} > \lambda\}) \,  \d\lambda 
        \lesssim_{d} \int_0^\infty  \lambda \, \min\!\big\{\lambda^{-1} \Vert f\Vert_{\ell^1(m^\omega)},  \lambda^{-p} [\mathcal{E}^\omega(f, f)]^{p/2} \big\} \, \d\lambda \\
        &=  \Vert f\Vert_{\ell^1(m^\omega)} \int_0^{\lambda^*} \d\lambda + [\mathcal{E}^\omega(f, f)]^{p/2}\int_{\lambda^*}^{\infty} \lambda^{1-p} \,\d\lambda
        \lesssim  \Vert f \Vert_{\ell^1(m^\omega)}^{4/(d+2)} [\mathcal{E}^\omega(f, f)]^{d/(d+2)}  \nonumber
    \end{align} where \(\lambda^* = \big[\mathcal{E}^\omega(f,f)^{p/2} / \Vert f\Vert_{\ell^1(m^\omega)} \big]^{1/(p-1)}\) is the intersection point of the two bounds for the integrand.
\end{remark}

\subsection{Uniform smoothing}\label{section:proofs_uniform_smoothing}

We recall an essentially classical functional inequalities argument on how to turn a Sobolev inequality such as Lemma~\ref{lemma:sobolev_inequality} into a heat kernel density bound such as Lemma~\ref{lemma:uniform_smoothing_heat_kernel_density_bound}.
\begin{proof}[Proof of Lemma~\ref{lemma:uniform_smoothing_heat_kernel_density_bound}]
    The Hölder interpolation 
    \begin{align*}
        \Vert f \Vert_{\ell^2(m^\omega)}^2
        \leq \Vert f \Vert_{\ell^p(m^\omega)}^{2d/(d+2)} \Vert f \Vert_{\ell^1(m^\omega)}^{4/(d+2)}
    \end{align*}
    turns the Sobolev inequality \(\Vert f\Vert_{\ell^p(m^\omega)}^2 \lesssim_d \mathcal{E}^\omega(f, f)\) from Lemma~\ref{lemma:sobolev_inequality} into the Nash inequality
    \begin{align}\label{equation:nash_inequality}
        \Vert f\Vert^{2 + 4/d}_{\ell^2(m^\omega)} \Vert f\Vert^{-4/d}_{\ell^1(m^\omega)}
        \lesssim_d \mathcal{E}^\omega(f, f).
    \end{align}
    The uniform heat-kernel density bound \eqref{equation:uniform_smoothing_heat_kernel_density_bound} follows from that as in~\cite[Lemma 2.8]{BarlowDeuschel2010}.
\end{proof}

\subsection{Reduction to diffusive range and confined path current inequality}\label{section:proofs_reduction}

We already argued that Proposition~\ref{proposition:quenched_subdiffusivity_of_the_corrector_along_paths_in_probability} (and hence Theorem~\ref{theorem:quenched_functional_CLT}) follows from the corresponding statement for the auxiliary walk \(Z\). Now we supply the already indicated proof conditional on Proposition~\ref{proposition:subdiffusive_control_by_maximal_function_of_energy}. A proof of the latter is provided directly after.
\begin{proposition}\label{proposition:quenched_sudiffusivity_of_the_corrector_in_probability_along_auxiliary_walk}
    For \(\mathbb{P}\)-a.a.\ \(\omega \in \Omega\) and all \(T \geq 0\) it holds in \(P_0^\omega\)-probability that
\begin{align}\label{equation:quenched_sudiffusivity_of_the_corrector_in_probability_along_auxiliary_walk}
        \frac{1}{n} \sup_{0 \leq t \leq n^2 T} \abs{\chi(\omega, Z_t)} \xrightarrow[n \to \infty]{} 0.
    \end{align} 
\end{proposition}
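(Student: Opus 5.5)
We prove \eqref{equation:quenched_sudiffusivity_of_the_corrector_in_probability_along_auxiliary_walk} from Proposition~\ref{proposition:subdiffusive_control_by_maximal_function_of_energy} by splitting the corrector along the path into a path current of the approximation error and a bounded remainder.

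\textbf{Step 1: telescoping along the path.} Fix a coordinate \(1 \leq j \leq d\). Since \(\chi(\omega, 0) = 0\) and \(\chi_j\) is covariant, telescoping along the jumps of \(Z\) and using the definition \eqref{equation:approximation_error_r_k} of \(r^{(k)}\) gives, for every \(k\),
\begin{align*}
    \chi_j(\omega, Z_t) = J^\omega_t(r^{(k)}_j) + \phi^{(k)}_j(\theta_{Z_t}\omega) - \phi^{(k)}_j(\omega).
\end{align*}
Here \(r^{(k)}_j\) is measurable, covariant and antisymmetric, being a difference of gradient fields. Because \(\phi^{(k)}_j\) is bounded, the last two terms are at most \(2\Vert \phi^{(k)}_j\Vert_\infty\) uniformly in \(t\), so they vanish after division by \(n\). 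Hence it suffices to show that for every \(\delta > 0\)
\begin{align*}
    \lim_{k\to\infty}\limsup_{n\to\infty} P_0^\omega\Big(\sup_{t \leq n^2 T} \abs{J^\omega_t(r^{(k)}_j)} > \delta n\Big) = 0.
\end{align*}

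\textbf{Step 2: applying Proposition~\ref{proposition:subdiffusive_control_by_maximal_function_of_energy}.} We apply it with \(t = n^2 T\) and \(\epsilon = \delta/\sqrt{T}\). For every \(K \geq 1\) this gives
\begin{align*}
    \limsup_{n} P_0^\omega\Big(\sup_{s \leq n^2T}\abs{J^\omega_s(r^{(k)}_j)} > \delta n\Big) \leq \frac{A^\omega}{K} + \frac{B_d K^{d/2}\sqrt{T}}{\delta}\sqrt{\mathcal{M}_0(e^\omega_{r^{(k)}_j})}.
\end{align*}
The case \(T = 0\) is trivial.

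\textbf{Step 3: the maximal energy along a subsequence.} This is the main point. We need \(\mathcal{M}_0(e^\omega_{r^{(k)}_j}) \to 0\), at least along a subsequence, for \(\mathbb P\)-a.a.\ \(\omega\). The field \(e^\omega_{r^{(k)}_j}\) is translation-invariant and non-negative, and by \eqref{equation:energy_goes_to_zero} its mean tends to zero. Lemma~\ref{lemma:weak_1_1} then gives
\begin{align*}
    \mathbb P\big(\mathcal{M}_0(e^\omega_{r^{(k)}_j}) > \eta\big) \lesssim_d \mathbb{E}[e^\omega_{r^{(k)}_j}(0)]/\eta .
\end{align*}
Choose a subsequence \(k_m\) with \(\mathbb{E}[e^\omega_{r^{(k_m)}_j}(0)] \leq 4^{-m}\) for all \(j\), and take \(\eta = 2^{-m}\). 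By Borel--Cantelli, \(\mathcal{M}_0(e^\omega_{r^{(k_m)}_j}) \leq 2^{-m}\) eventually, \(\mathbb P\)-a.s.

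\textbf{Step 4: conclusion.} On the full-measure set where \(A^\omega < \infty\) and Step 3 holds, we first let \(m \to \infty\) for fixed \(K\). The second term in Step 2 then disappears, leaving a bound of \(A^\omega/K\). Letting \(K \to \infty\) sends this to zero. Since the left-hand side does not depend on \(k\), and the bounded remainder was already negligible, this proves convergence in probability for each coordinate \(j\), and hence for \(\chi\). The null sets are countable in number: \(j\), \(m\), and the quantifiers \(\delta, T\) can be taken rational by monotonicity. So the exceptional set does not depend on \(T\) or \(\delta\).
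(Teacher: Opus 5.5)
Your proof is correct and follows the paper's argument. You telescope $\chi(\omega,Z_t)$ into the path current $J^\omega_t(r^{(k)})$ plus a bounded remainder, apply Proposition~\ref{proposition:subdiffusive_control_by_maximal_function_of_energy} coordinatewise, and then send first $k\to\infty$ along a subsequence and afterwards $K\to\infty$. Your Step~3 (weak-(1,1) plus Borel--Cantelli) makes explicit the ``suitable subsequence'' the paper invokes without detail. Since you only use that subsequence, the $\lim_{k\to\infty}$ in Step~1 should be read as a limit along $(k_m)$, or an infimum over $k$; this is all the reduction needs.
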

\begin{proof}
    Recall the approximation of \(\chi\) by the gradients of bounded measurable \((\phi^{(k)})_{k \in \N}\) and the corresponding approximation errors \((r^{(k)})_{k \in \N}\) from \eqref{equation:approximation_error_r_k}.
    We get from \(\chi(\omega, 0) = 0\) and telescoping
    \begin{align}
        \chi(\omega, Z_t)
        = J_t^{\omega}(r^{(k)}) + [\phi^{(k)}(\theta_{Z_t} \omega) - \phi^{(k)}(\omega)].
    \end{align} The second term on the r.h.s. is bounded by \(2 \Vert \phi^{(k)} \Vert_\infty\), so that we only have to control the first one if we can send \(t \to \infty\) first and \(k \to \infty\) later.
    Indeed, we have for all \(\epsilon > 0\) and \(K \geq 1\),
    \begin{align}
        \limsup_{t \to \infty} P_0^\omega\big(\sup_{s \leq t} \abs{\chi(\omega, Z_s)} > \epsilon \sqrt{t}\big)
        &\leq 
        \sum_{j = 1}^{d} \limsup_{t \to \infty} P_0^\omega\big(\sup_{s \leq t} \abs{J^\omega_s(r^{(k)}_j)} >  \frac{\epsilon}{c_d} \sqrt{t}\big) \\
        &\leq \sum_{j = 1}^{d} \bigg[ \frac{A^\omega}{K} + \frac{c_d B_d K^{d/2}}{\epsilon}  \sqrt{\mathcal{M}_0(e^\omega_{r^{(k)}_j})} \bigg] \nonumber
    \end{align} by Proposition~\ref{proposition:subdiffusive_control_by_maximal_function_of_energy}. Now send first \(k \to \infty\) along a suitable subsequence, using the independence of the left-hand side of \(k\) and \eqref{equation:energy_goes_to_zero}, and then \(K \to \infty\) to obtain that it \(\mathbb{P}\)-a.s.\ holds that
    \begin{align}
        \limsup_{t \to \infty} P_0^\omega\big(\sup_{s \leq t} \abs{\chi(\omega, Z_s)} > \epsilon \sqrt{t}\big) = 0
    \end{align} for all \(\epsilon > 0\). Measurability and the right order of quantifiers can be ensured by the standard arguments of choosing rational \(\epsilon\) and so on.
\end{proof}

As promised, we will now demonstrate the elementary reduction of Proposition~\ref{proposition:subdiffusive_control_by_maximal_function_of_energy} to Lemma~\ref{lemma:auxiliary_RW_diffusive_range} and Lemma~\ref{lemma:auxiliary_RW_confined_currents}.
\begin{proof}[Proof of Proposition~\ref{proposition:subdiffusive_control_by_maximal_function_of_energy}]
    Put \(s_j = t 2^{-j}\) and \(L_j = K 2^{j/(2d)}\) for \(t \geq 2\). Define a good event 
    \begin{align}
        G_t
        := \{\forall \, 0 \leq j < \lfloor \log_2 t \rfloor \,\colon\, \tau_{Q_{\lceil L_j \sqrt{s_j} \rceil}} > s_j \big\}.
    \end{align}
    The probability of \(G_t\) not happening is controlled by Lemma~\ref{lemma:auxiliary_RW_diffusive_range} and a union bound as
    \begin{align}
        P^\omega_0(G_t^c) 
        \leq \frac{C^\omega}{K} \sum_{j \geq 0} 2^{-j/(2d)}.
    \end{align}
    On \(G_t\), we will use the brutal deterministic bound
    \begin{align}
        \sup_{s \leq t} \abs{J_s^\omega(r)}
        &\leq \sup_{s \leq 2} \abs{J_s^\omega(r)} + \sum_{0 \leq j < \lfloor \log_2 t \rfloor} \sup_{s_j/2 \leq s \leq s_j} \abs{J^\omega_s(r) - J^\omega_{s_j / 2}(r)}.
    \end{align} The first summand on the r.h.s.\ is finite almost-surely and independent of \(t\), so that it vanishes under the \(\frac{1}{\sqrt{t}}\)-scaling as \(t \to \infty\).
    For the second summand we apply Lemma~\ref{lemma:auxiliary_RW_confined_currents} after Markov's inequality to get
    \begin{align}
        &P_0^\omega\Big( \Big\{\sum_{0 \leq j < \lfloor \log_2 t \rfloor} \sup_{s_j/2 \leq s \leq s_j} \abs{J^\omega_s(r) - J^\omega_{s_j / 2}(r)} > \epsilon \sqrt{t}\Big\} \cap G_t \Big) \\
        &\lesssim_d \sum_{j \geq 0} \frac{1}{\epsilon \sqrt{t}} \sqrt{s_j L_j^d} \sqrt{\mathcal{M}_0(e^\omega_r)}
        \leq \frac{K^{d/2}}{\epsilon} \sqrt{\mathcal{M}_0(e^\omega_r)} \sum_{j \geq 0} 2^{-j/4}. \nonumber
    \end{align}
\end{proof}

\subsection{Diffusive range}\label{section:diffusive_range}

Here, we present a
\begin{proof}[Proof of Lemma~\ref{lemma:auxiliary_RW_diffusive_range}]
    We bootstrap the inequality \eqref{equation:range_entropy_activity_inequality}. 
    Set \(U(t) := \frac{1}{\sqrt{t}} E_0^\omega[\sup_{s \leq t} \abs{Z_s}]\). The entropy comparison \eqref{equation:entropy_difference_half_the_time} gives
    \begin{align}
        H^\omega(t) - H^\omega(t/2) 
        \leq C_d'' + d \log(1 + U(t)).
    \end{align} 
    The pathwise inequality
    \begin{align}
        \sup_{s \leq t} \abs{Z_s}
        \leq \sup_{s \leq t/2} \abs{Z_s} + \sup_{t/2 \leq s \leq t} \abs{Z_s - Z_{t/2}}
    \end{align} together with the range-entropy-activity inequality \eqref{equation:range_entropy_activity_inequality} and the activity bound \eqref{equation:activity_bound} yields
    \begin{align}
        U(t)
        \leq \frac{1}{\sqrt{2}} U(t/2) + \widetilde{K}^\omega \sqrt{1 + \log(1 + U(t))}.
    \end{align}
    Now, taking the supremum, we get
    \begin{align}\label{equation:bootstrap}
        \sup_{1 \leq t \leq T} U(t)
        \leq \max\Big\{ A^\omega(2), \frac{1}{\sqrt{2}} \sup_{1 \leq t \leq T} U(t) +  \widetilde{K}^\omega \sqrt{1 + \log[1 + \sup_{1 \leq t \leq T} U(t)]} \Big\}
    \end{align} because nearest-neighbor jumps immediately give \(U(t) \leq \frac{1}{\sqrt{t}} A^\omega(t)\).
    
    But if \(\sup_{1 \leq t \leq T} U(t) > \max\{A^\omega(2), 1\}\), then \eqref{equation:bootstrap} and \(\log(1+z) \leq z\) already imply
    \begin{align}
        \sup_{1 \leq t \leq T} U(t)
        \leq \frac{\sqrt{2}}{1 - \frac{1}{\sqrt{2}}} \widetilde{K}^\omega \sqrt{\sup_{1 \leq t \leq T} U(t)}
    \end{align} and consequently
    \begin{align}
        \sup_{1 \leq t \leq T} U(t)
        \leq \max\{A^\omega(2), 1,  \frac{2}{(1 - \frac{1}{\sqrt{2}})^2} (\widetilde{K}^\omega)^2\}
    \end{align} uniformly in \(T \geq 2\).
\end{proof}

\begin{lemma}[\(H^\omega(t) \approx \frac{d}{2} \log(t)\) up to a posteriori bounded terms]\label{lemma:entropy_comparison}
    We have, for \(t \geq 1\),
    \begin{align}\label{equation:entropy_comparison}
        \frac{d}{2} \log(t) - C_d
        \leq H^\omega(t)
        \leq \frac{d}{2} \log(t) + C_d' + d \log(1 + t^{-1/2} E_0^\omega[\abs{Z_t}]).
    \end{align}
    In particular, for \(t \geq 2\),
    \begin{align}\label{equation:entropy_difference_half_the_time}
        H^\omega(t) - H^\omega(t/2) 
        \leq C_d'' + d \log(1 + t^{-1/2} E_0^\omega[\abs{Z_t}]).
    \end{align}
\end{lemma}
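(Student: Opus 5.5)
The two bounds in \eqref{equation:entropy_comparison} have different sources. The lower bound comes directly from the uniform smoothing Lemma~\ref{lemma:uniform_smoothing_heat_kernel_density_bound}. The upper bound is a Gibbs-type inequality comparing \(q^\omega_t\) with a reference density that spreads over the scale \(\sqrt{t}\). Once both are in hand, \eqref{equation:entropy_difference_half_the_time} follows by combining the upper bound at time \(t\) with the lower bound at time \(t/2\).

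\emph{Lower bound.} Lemma~\ref{lemma:uniform_smoothing_heat_kernel_density_bound} gives \(q^\omega_t(x) \leq c_d t^{-d/2}\) for all \(x\), hence \(-\log q^\omega_t(x) \geq \frac{d}{2}\log t - \log c_d\). Multiplying by \(p^\omega_t(x)\) and summing, using \(\sum_x p^\omega_t(x) = 1\) (the walk is non-explosive), yields \(H^\omega(t) \geq \frac{d}{2}\log t - C_d\).

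\emph{Upper bound.} For any probability density \(g\) with respect to \(m^\omega\), that is \(g \geq 0\) and \(\sum_x m^\omega(x) g(x) = 1\), Jensen's inequality (nonnegativity of relative entropy) gives
\begin{align*}
    H^\omega(t) = -\sum_x p^\omega_t(x)\log q^\omega_t(x) \leq -\sum_x p^\omega_t(x) \log g(x).
\end{align*}
I would choose \(g(x) = \frac{1}{Z_\rho}(1+\abs{x}/\rho)^{-(d+1)}\) with \(\rho = \sqrt{t}\) and normalization \(Z_\rho = \sum_x m^\omega(x)(1+\abs{x}/\rho)^{-(d+1)}\). The key point is that \(m^\omega \leq 1\), because \(\mathcal{M}(\nu^\omega+1) \geq 1\). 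Therefore
\begin{align*}
    Z_\rho \leq \sum_x (1+\abs{x}/\rho)^{-(d+1)} \lesssim_d \rho^d = t^{d/2}
\end{align*}
for \(t \geq 1\). This gives
\begin{align*}
    H^\omega(t) \leq \log Z_\rho + (d+1) E_0^\omega\big[\log(1+\abs{Z_t}/\sqrt{t})\big].
\end{align*}
By concavity of \(\log\) (Jensen), the expectation is at most \(\log(1 + t^{-1/2}E_0^\omega\abs{Z_t})\). This yields the upper bound with constant \(d+1\) in front of the logarithm instead of \(d\).

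To obtain exactly \(d\), I would replace the exponent \(d+1\) by \(d+\delta\) and track the constant, or simply accept \(d+1\), since the bootstrap in the proof of Lemma~\ref{lemma:auxiliary_RW_diffusive_range} only uses that the coefficient is a dimensional constant. I regard this constant-matching as the only delicate point. An alternative that gives exactly \(d\) is a two-parameter family \(g_\rho\) with \(\rho = \sqrt{t}+E_0^\omega\abs{Z_t}\): use tail decay \((1+\abs{x}/\rho)^{-(d+1)}\) together with \(\log Z_\rho \leq d\log\rho + C\), and bound \(E\log(1+\abs{Z_t}/\rho) \leq \log 2\). Then \(d\log\rho = \frac{d}{2}\log t + d\log(1+t^{-1/2}E_0^\omega\abs{Z_t})\), which is exactly the claimed form. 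I would use this choice.

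\emph{Half-time difference.} Combine the upper bound at \(t\) with the lower bound at \(t/2\):
\begin{align*}
    H^\omega(t) - H^\omega(t/2) \leq \frac{d}{2}\log 2 + C_d + C_d' + d\log(1+t^{-1/2}E_0^\omega\abs{Z_t}).
\end{align*}
This requires \(t/2 \geq 1\), i.e.\ \(t \geq 2\), as in the statement. Setting \(C_d'' := \frac{d}{2}\log 2 + C_d + C_d'\) gives \eqref{equation:entropy_difference_half_the_time}.

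The only remaining technical point is finiteness of \(E_0^\omega\abs{Z_t}\). If it is infinite, the bound is trivial. Otherwise it is controlled by the activity, since \(\abs{Z_t} \leq N_t\), and Lemma~\ref{lemma:almost_sure_activity_bound} bounds \(E_0^\omega[N_t]\).
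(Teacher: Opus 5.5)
Your proposal is correct and follows essentially the paper's proof: the lower bound comes from the uniform heat-kernel bound, and the upper bound from a Gibbs (relative-entropy) comparison with a reference density at scale \(\rho = \sqrt{t} + E_0^\omega\abs{Z_t}\), using \(m^\omega \leq 1\) to bound its normalization by \(\lesssim_d \rho^d\). The differences are cosmetic. The paper uses the exponential weight \(\e^{-\abs{x}/\rho}\), normalized with respect to counting measure after discarding \(\sum_x p^\omega_t(x)\log m^\omega(x) \leq 0\); you use the polynomial weight \((1+\abs{x}/\rho)^{-(d+1)}\), normalized with respect to \(m^\omega\). The half-time bound is derived in the same way in both.
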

\begin{proof}
    The lower bound on \(H^\omega(t)\) follows immediately from the heat-kernel density bound Lemma~\ref{lemma:uniform_smoothing_heat_kernel_density_bound}.
    For the upper bound, we compare with the Gibbsian probability density \(Z_\alpha^{-1} \e^{-\alpha \abs{x}}\).
    Indeed, since \(m^\omega \leq 1\),
    \begin{align}
        H^\omega(t)
        \leq \sum_{x \in \Z^d} p^\omega_t(x) \log\Big(\frac{Z_\alpha^{-1} \e^{-\alpha\abs{x}}}{p^\omega_t(x)} \Big) + \log(Z_\alpha) + \alpha E_0^\omega[\abs{Z_t}],
    \end{align} and choosing \(\alpha = (\sqrt{t} + E_0^\omega[\abs{Z_t}])^{-1}\) yields with Jensen's inequality
    \begin{align}
        H^\omega(t)
        \leq \log(Z_\alpha) + 1.
    \end{align}
    It remains to bound, \(0 < \alpha \leq 1\),
    \begin{align}
        Z_\alpha
        = \sum_{x \in \Z^d} \e^{-\alpha \abs{x}}
        \lesssim_d \alpha^{-d}
    \end{align} to obtain \eqref{equation:entropy_comparison}.
\end{proof}
Still left to show is the activity bound used in the proof of Lemma~\ref{lemma:auxiliary_RW_diffusive_range} that we claimed in~\eqref{equation:activity_bound}:
\begin{proof}[Proof of Lemma~\ref{lemma:almost_sure_activity_bound}]
Let $\theta_{Z_t}\omega$ be the process of the environment as seen from the particle, cf.~\cite[Sec. 2.1.]{Biskup2011Recent}, and denote by $T_t$ its Markov semigroup, i.e. $T_tf=E_0^\omega[f(\theta_{Z_t}\omega)]$. The probability measure $\mathbb{Q}$ given by $m$-tilting $\mathbb P$, i.e. $\mathbb{Q} (\d \omega)=\frac{m^\omega(0)}{\E[m^\omega(0)]} \mathbb{P}(\d \omega)$, is invariant with respect to $(T_t)_t$. Since the jump rate of $Z$ is given by $a(\theta_{Z_t}\omega)$ for $a(\omega):=\frac{\mu^\omega(0)}{m^\omega(0)}$, we have
\begin{align*}
    E^{\omega}_0[N_t] = \int_{0}^{t} T_s a(\omega) \d s
\end{align*}
and therefore, simply by monotonicity in time and by the semigroup property,
\begin{align*}
    E^{\omega}_0[N_t] \leq \sum^{\lceil t \rceil - 1}_{n=0} T_1^n\left(\int_{0}^{1}T_s a(\omega) \d s\right).
\end{align*}
Notice that $\int_{0}^{1}T_s a(\omega) \d s\in L^1(\mathbb{Q})$ and $T_1$ is a positive Dunford-Schwartz operator, i.e., a positive contraction on both $L^1(\mathbb{Q})$ and $L^\infty(\mathbb{Q})$. Thus, by the Dunford-Schwartz pointwise ergodic theorem~\cite[Theorem 11.4]{EFHN2015}, there exists $K=K^\omega<\infty$ such that $\mathbb{Q}$-a.s.
\begin{align*}
    \sup_{t>0}\frac{1}{\lceil t \rceil}\sum^{\lceil t \rceil - 1}_{n=0} T_1^n\left(\int_{0}^{1}T_s a(\omega) \d s\right)\leq K^\omega<\infty.
\end{align*}

The claim now follows since $\mathbb Q$ and $\mathbb P$ are equivalent.
\end{proof}

\subsection{Confined path currents}\label{section:confined_path_currents}

Let us finally provide a small
\begin{proof}[Proof of Lemma~\ref{lemma:auxiliary_RW_confined_currents}]
    Write \(Q = Q_{\lceil L \sqrt{s} \rceil}\). We have 
    \begin{align}
        P_0^\omega(Z_{s/2} = x, \tau_Q > s/2)
        \lesssim_d s^{-d/2} m^\omega(x)
    \end{align} by Lemma~\ref{lemma:uniform_smoothing_heat_kernel_density_bound}.
    Applying the Markov property at \(s/2\) and then Lemma~\ref{lemma:fluctuations_energy_for_reflected_walk_in_equilibrium}, we get
    \begin{align}
        &E_0^\omega \Big[\mathbf{1}_{\tau_{Q} > s}  \sup_{s/2 \leq v \leq s} \abs{J^\omega_{v}(r) - J^\omega_{s/2}(r)}^2 \Big]
        \lesssim_d s^{-d/2} \sum_{x \in Q} m^\omega(x) E^\omega_x[\mathbf{1}_{\tau_Q > s/2} \sup_{v \leq s/2} \abs{J^\omega_{v}(r)}^2 ] \nonumber\\
        &\leq s^{-d/2} \sum_{x \in Q} m^\omega(x) E_x^{\omega, \mathrm{reflected}}[\sup_{v \leq s/2} \abs{J^\omega_{v}(r)}^2 ]  
        \lesssim s^{1-d/2} \sum_{x, y \in Q} \omega(x, y) \abs{r(\omega, x, y)}^2 \nonumber \\
        &\lesssim_d s L^d \mathcal{M}_0(e^\omega_r). \nonumber
    \end{align} The \(L^1\)-bound in the statement of the lemma follows with Cauchy-Schwarz.
\end{proof}

\subsection*{Disclosure of AI use}
The concrete proof strategy is a product of discussion with and mostly the accomplishment of \texttt{ChatGPT 5.6 Sol} and \texttt{ChatGPT 6 Astra}. It was only later clarified and written down more clearly by us, in particular all of the writing is our own. We take full responsibility for the correctness of the arguments presented in this note.

\subsection*{Acknowledgments} 
We are grateful for interesting conversations with Sebastian Andres regarding this area of mathematics.
Moreover, we want to thank every \texttt{AI} that we know for always being so friendly and helpful to us.

\bibliographystyle{alpha}
\bibliography{references}

\end{document}